\documentclass[reqno,12pt]{amsart}

\usepackage{amsmath, amsthm, amsfonts, amssymb}
\input{mathrsfs.sty}

\usepackage{amsmath}
\usepackage{amsfonts}
\usepackage{amssymb}
\usepackage{eufrak}
\usepackage{amscd}
\usepackage{amsthm}
\usepackage{amstext}
\usepackage[all]{xy}

\newcommand{\supp}{\operatorname{supp}}

   \theoremstyle{plain}
   \newtheorem{thm}{Theorem}[section]
   \newtheorem{prop}[thm]{Proposition}
   \newtheorem{lem}[thm]{Lemma}
   \newtheorem{cor}[thm]{Corollary}
   \theoremstyle{definition}

   \theoremstyle{remark}

 \numberwithin{equation}{section}

\author{V. Manuilov}

\date{}

\address{Moscow Center for Fundamental and Applied Mathematics, Moscow State University,
Leninskie Gory 1, Moscow, 
119991, Russia}

\email{manuilov@mech.math.msu.su}

\thanks{The author acknowledges support by the Russian Science Foundation grant No. 25-11-00018.}

\title{On ideals in the semilattice of coarse equivalence classes of metrics}

\begin{document}

\begin{abstract}
For a Hausdorff topology on the set of ideals of the semilattice $M(X)$ of coarse equivalence classes of metrics on a set $X$, the space $I(M(X))$ of ideals is the closure of the set of principal ideals, thus allowing to view non-principal ideals as generalizations of coarse equivalence classes of metrics. 

Some ideals arise from coarse structures on $X$. We define a map $\Phi$ from $I(M(X))$ to the set $CS(X)$ of coarse structures on $X$, and a map $\Psi$ backwards, and show that $\Psi\circ\Phi$ is the identity map, thus allowing to identify coarse structures with some ideals of $M(X)$. We show that there are ideals that do not come from $CS(X)$.

For any ideal $F$ we define the generalized uniform Roe algebra as the direct limit $C^*$-algebra of the uniform Roe algebras for the equivalence classes of metrics in the ideal, and show that it coincides with the uniform Roe algebra of $\Phi(F)$.
\end{abstract}

\maketitle

\section{Introduction}

Two metrics, $d_1$ and $d_2$, on a set $X$ are coarsely equivalent ($d_1\sim d_2$) if there exists a monotone function $\varphi:[0,\infty)\to[0,\infty)$ with $\lim_{t\to\infty}\varphi(t)=\infty$ such that 
$$
d_1(x,y)\leq\varphi(d_2(x,y)) \quad\mbox{and}\quad d_2(x,y)\leq\varphi(d_1(x,y))\quad \mbox{for\ any\ }x,y\in X. 
$$

Some geometric problems deal not with individual metrics, but with the space of all metrics, and often one has to complete this space by the limit points, which are not metrics. In this paper we complete not the space of metrics, but that of their coarse equivalence classes. To this end we use a natural join semilattice structure on the set $M(X)$ of equivalence classes of metrics, and view ideals in this join semilattice as generalized equivalence classes of metrics, while the equivalence classes of usual metrics correspond to the principal ideals. This completion is the closure of $M(X)$ with respect to a natural Hausdorff topology.

Let $I(M(X))$ denote the set of all ideals in the semilattice $M(X)$ of coarse equivalence classes of metrics on $X$.
Some ideals arise from coarse structures on $X$. We define a map $\Phi$ from $I(M(X))$ to the set $CS(X)$ of coarse structures on $X$, and a map $\Psi$ backwards, and show that $\Psi\circ\Phi$ is the identity map, thus allowing to identify coarse structures with certain ideals of $M(X)$. We show that there are ideals that do not come from $CS(X)$.

For any ideal $F\in I(M(X))$ (generalized metric) one can define the generalized uniform Roe algebra $C^*_u(X,F)$ as the direct limit $C^*$-algebra of the uniform Roe algebras for all equivalence classes of metrics in the ideal. On this way we loose the one-to-one correspondence between the equivalence classes of metrics and their uniform Roe algebras. We show that different ideals, $F_1$ and $F_2$, have the same generalized uniform Roe algebra iff these ideals correspond to the same coarse structure, i.e. if $\Phi(F_1)=\Phi(F_2)$. 

\section{Equivalence classes of metrics as a lattice}

Recall that a partially ordered set $L$ is a join semilattice if for any $\alpha,\beta\in L$ there exists their least upper bound, denoted by $\alpha\lor \beta$. 

Recall that a metric $d$ on $X$ is uniformly discrete if there exists $r>0$ such that $d(x,y)\geq r$ for any $x\neq y\in X$.

Let $\mathcal M(X)$ denote the set of all uniformly discrete metrics on a set $X$, and let $M(X)$ be the set of coarse equivalence classes of uniformly discrete metrics metrics on $X$, i.e. $M(X)=\mathcal M(X)/\sim$. For a metric $a$, we write $[a]$ for its coarse equivalence class.

Define a partial order on $\mathcal M(X)$ by setting $a\preceq b$, $a,b\in\mathcal M(X)$, if $a(x,y)\geq b(x,y)$ for any $x,y\in X$. 

Given $a,b\in\mathcal M(X)$, set 
$$
a\lor b(x,y)=\inf\Bigl\{\sum_{i=1}^n\min(a(x_{i-1},x_i),b(x_{i-1},x_i))\Bigr\},
$$  
where the infimum is over all sequences of points $x=x_0,x_1,\ldots,x_n=y$.

The following statement is folklore.

\begin{lem}
$a\lor b\in\mathcal M(X)$, and is the least upper bound for $a$ and $b$.

\end{lem}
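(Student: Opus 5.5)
The plan is to unwind the order first. Since $a\preceq b$ means $a\geq b$ pointwise, an upper bound $c$ of $a$ and $b$ is a metric with $c\leq a$ and $c\leq b$ pointwise, i.e. $c\leq\min(a,b)$. So the claim is that $a\lor b$ is a uniformly discrete metric, that it lies below $\min(a,b)$, and that it dominates every metric $c$ lying below $\min(a,b)$. In other words, $a\lor b$ should be the largest metric below the function $m(x,y)=\min(a(x,y),b(x,y))$. That function need not satisfy the triangle inequality, which is why the formula takes an infimum over chains. I will verify the properties in this order: finiteness and the bound $a\lor b\leq m$, the metric axioms, uniform discreteness, and maximality.

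\emph{Finiteness and upper bound.} Taking the trivial chain $x_0=x$, $x_1=y$ with $n=1$ gives $a\lor b(x,y)\leq m(x,y)$. In particular the infimum is finite. It also shows $a\lor b\leq a$ and $a\lor b\leq b$, i.e. $a\preceq a\lor b$ and $b\preceq a\lor b$, once we know $a\lor b\in\mathcal M(X)$. For $x=y$ this same chain gives $a\lor b(x,x)=0$.

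\emph{Metric axioms.} Symmetry follows by reversing a chain, since $m$ is symmetric. For the triangle inequality, fix $\varepsilon>0$ and choose chains from $x$ to $z$ and from $z$ to $y$ whose sums are within $\varepsilon$ of $a\lor b(x,z)$ and $a\lor b(z,y)$ respectively. Concatenating them gives a chain from $x$ to $y$, so $a\lor b(x,y)\leq a\lor b(x,z)+a\lor b(z,y)+2\varepsilon$.

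\emph{Uniform discreteness.} This is the only step needing care, since it is what rules out $a\lor b(x,y)=0$ for $x\neq y$. Let $r_a,r_b>0$ be the discreteness constants of $a$ and $b$. If $x\neq y$, then any chain from $x$ to $y$ has at least one step with $x_{i-1}\neq x_i$. For that step, $m(x_{i-1},x_i)\geq\min(r_a,r_b)$. All other terms are nonnegative, so every chain sum is at least $\min(r_a,r_b)$. Hence $a\lor b(x,y)\geq\min(r_a,r_b)>0$, which gives both positivity and uniform discreteness.

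\emph{Least upper bound.} Let $c\in\mathcal M(X)$ be any upper bound, so $c\leq a$ and $c\leq b$, hence $c\leq m$. For any chain from $x$ to $y$, the triangle inequality for $c$ gives
$$
c(x,y)\leq\sum_{i=1}^n c(x_{i-1},x_i)\leq\sum_{i=1}^n m(x_{i-1},x_i).
$$
Taking the infimum over chains yields $c\leq a\lor b$, i.e. $a\lor b\preceq c$.

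I expect no serious obstacle; the only non-formal point is the uniform discreteness step above.
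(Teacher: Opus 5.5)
Your proposal is correct and is essentially the paper's argument. In both, $a\lor b$ is the path metric for edge weights $\min(a,b)$, the one-step chain gives $a\lor b\le\min(a,b)$, and the triangle inequality for $c$ gives maximality. The only difference is that you spell out the uniform discreteness bound $a\lor b(x,y)\ge\min(r_a,r_b)$ for $x\neq y$, which the paper merely asserts.
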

\begin{proof}
Think of $X$ as a weighted complete graph such that the length of an edge $[x_i,x_j]$ equals $\min(a(x_i,x_j),b(x_i,x_j))$. Then $a\lor b(x,y)$ is the distance between $x$ and $y$ in this graph, hence it satisfies the triangle inequality. As both $a$ and $b$ are uniformly discrete, so is $a\lor b$.

Clearly, $a\lor b(x,y)\leq \min(a(x,y),b(x,y))\leq a(x,y)$ for any $x,y$, hence $a\preceq a\lor b$. Similarly, $b\preceq a\lor b$.

Let $c\in \mathcal M(X)$ satisfy $a\preceq c$ and $b\preceq c$. We have 
$$
c(x,y)=\inf\Bigl\{\sum_{i=1}^n c(x_{i-1},x_i)\Bigr\}\leq \inf\Bigl\{\sum_{i=1}^n\min(a(x_{i-1},x_i),b(x_{i-1},x_i))\Bigr\}=a\lor b(x,y),
$$  
hence $a\lor b\preceq c$.
\end{proof}

Thus, $\mathcal M(X)$ is a join semilattice with respect to $\lor$.

\begin{lem}
Let $a,a',b,b'\in\mathcal M(X)$, $a\sim a'$, $b\sim b'$. Then $a\lor b\sim a'\lor b'$. 

\end{lem}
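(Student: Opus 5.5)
The plan is to compare the two path metrics chain by chain. Fix a single monotone function $\varphi$ that works for both pairs $(a,a')$ and $(b,b')$; for instance, take the pointwise maximum of the two given functions, which is still monotone and tends to infinity. Then for all $x,y$,
$$
\min(a'(x,y),b'(x,y))\leq \min\bigl(\varphi(a(x,y)),\varphi(b(x,y))\bigr)=\varphi\bigl(\min(a(x,y),b(x,y))\bigr),
$$
where the equality uses monotonicity of $\varphi$. By symmetry the same inequality holds with the primed and unprimed metrics interchanged.

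The obstacle is that $a'\lor b'$ is an infimum of \emph{sums}, and $\sum_i\varphi(t_i)$ need not be bounded by a function of $\sum_i t_i$ alone, since the number of terms could be large. This is where uniform discreteness enters. Let $r>0$ be such that $a,b\geq r$ off the diagonal. Fix $x\neq y$ and set $S=a\lor b(x,y)$. Choose a chain $x=x_0,\ldots,x_n=y$ with $\sum_i\min(a(x_{i-1},x_i),b(x_{i-1},x_i))\leq S+1$. After deleting repeated consecutive points, which does not increase the sum, every term $t_i$ satisfies $t_i\geq r$. Hence $n\leq (S+1)/r$, and also each $t_i\leq S+1$. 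Therefore
$$
a'\lor b'(x,y)\leq\sum_{i=1}^n\min(a'(x_{i-1},x_i),b'(x_{i-1},x_i))\leq\sum_{i=1}^n\varphi(t_i)\leq \frac{S+1}{r}\,\varphi(S+1).
$$

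Define $\psi(t)=\frac{t+1}{r_0}\varphi(t+1)+t$, where $r_0$ is a common discreteness constant for $a,b,a',b'$ (the minimum of the four constants). This $\psi$ is monotone, tends to infinity, and is independent of the pair $x,y$. The argument above gives $a'\lor b'\leq\psi(a\lor b)$. The symmetric argument, starting from a near-optimal chain for $a'\lor b'$, gives $a\lor b\leq\psi(a'\lor b')$. On the diagonal both metrics vanish, so nothing needs checking there. This single $\psi$ witnesses $a\lor b\sim a'\lor b'$.

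The step I expect to need care is bounding the length $n$ of the chain. It requires both discarding degenerate steps and using a chain that is only near-optimal, since the infimum need not be attained. Uniform discreteness, which the earlier lemma shows is preserved by $\lor$, is exactly what makes this bound available.
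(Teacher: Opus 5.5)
Your argument is correct, but its key step differs from the paper's. Like you, the paper merges the two control functions into one $\varphi$. It then asserts that without loss of generality $\varphi$ is superadditive, $\varphi(u+v)\geq\varphi(u)+\varphi(v)$. Then $\sum_i\varphi(t_i)\leq\varphi(\sum_i t_i)$ along every chain, and $a'\lor b'\leq\varphi(a\lor b)$ follows directly, with $\varphi$ passed through the infimum by continuity (the paper works with homeomorphisms). You never need superadditivity: you take a near-optimal chain, use uniform discreteness to bound its length by $(S+1)/r$, and bound each term by $\varphi(S+1)$.

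The two routes are closer than they look, because the paper's reduction silently depends on the same hypothesis. Iterating superadditivity gives $\psi(nt)\geq n\psi(t)$, so no superadditive $\psi$ can dominate, e.g., $\sqrt{t}$ near $0$. One must therefore use that the chain terms lie in $\{0\}\cup[r,\infty)$. On that set the natural superadditive majorant is $t\mapsto\frac{t}{r}\varphi(\max(t,r))$, which is essentially your $\psi$.

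The paper's version is shorter and gives a tidier inequality with a single function. Yours makes explicit exactly where uniform discreteness enters and produces an explicit control function. It also uses only monotonicity of $\varphi$, which matches the definition of $\sim$ in the introduction.

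One small correction to your closing remark. What you actually use is uniform discreteness of $a,b$ (and of $a',b'$) themselves, which forces each nonzero term $\min(a(x_{i-1},x_i),b(x_{i-1},x_i))$ to be at least $r$. The fact that $\lor$ preserves uniform discreteness plays no role in your argument.
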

\begin{proof}
By the assumption, there are homeomorphisms $\phi_1,\phi_2$ of $[0,\infty)$ such that $a'(x,y)\leq\phi_1(a(x,y))$ and $b'(x,y)\leq\phi_2(b(x,y))$. First, note that we may leave only one homeomorphism, e.g. $\varphi$ such that $\varphi(t)=\max(\phi_1(t),\phi_2(t))$. Second, note that without loss of generality we may assume that $\varphi$ satisfies $\varphi(u+v)\geq\varphi(u)+\varphi(v)$ for any $u,v\geq 0$. Then
\begin{eqnarray*}
a'\lor b'(x,y)&=&\inf\Bigl\{\sum_{i=1}^n\min(a'(x_{i-1},x_i),b'(x_{i-1},x_i))\Bigr\}\\
&\leq&
\inf\Bigl\{\sum_{i=1}^n\varphi(\min(a(x_{i-1},x_i),b(x_{i-1},x_i)))\Bigr\}\\
&\leq& 
\inf\varphi\Bigl(\sum_{i=1}^n\min(a(x_{i-1},x_i),b(x_{i-1},x_i))\Bigr)\\
&=&\varphi(a\lor b(x,y)).
\end{eqnarray*}  
Interchanging $a$ and $a'$, $b$ and $b'$, we get coarse equivalence of $a\lor b$ and $a'\lor b'$.
\end{proof}

Thus, $\lor$ is well defined on the equivalence classes, $[a]\lor[b]=[a\lor b]$, making $M(X)$ a join semilattice. 

\begin{lem}
Let $a,b\in \mathcal M(X)$. The following are equivalent: 
\begin{enumerate}
\item
$[a]\lor[b]=[a]$; 
\item
there exists a homeomorphism $\varphi$ of $[0,\infty)$ such that $a(x,y)\leq \varphi(b(x,y))$ for any $x,y\in X$. 

\end{enumerate}
\end{lem}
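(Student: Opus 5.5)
For the implication (1)$\Rightarrow$(2), the condition $[a]\lor[b]=[a]$ means $a\lor b\sim a$. We then have a monotone function $\varphi$ tending to infinity with $a(x,y)\leq \varphi(a\lor b(x,y))$. The first Lemma gives $a\lor b(x,y)\leq \min(a(x,y),b(x,y))\leq b(x,y)$, and monotonicity of $\varphi$ then yields $a(x,y)\leq\varphi(b(x,y))$. What remains is to upgrade $\varphi$ to a homeomorphism of $[0,\infty)$. We may replace $\varphi$ by any larger function, so we take a strictly increasing continuous function $\psi\geq\varphi$ with $\psi(0)=0$ and $\psi(t)\to\infty$. For example, set $\psi(t)=t+\int_{t}^{t+1}\varphi(s)\,ds$ for $t>0$, suitably adjusted near $0$.

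The behaviour near $0$ is harmless. Since $b$ is uniformly discrete with constant $r$, the only pair with $b(x,y)<r$ is $x=y$, where $a(x,y)=0$. We can therefore modify $\psi$ on $[0,r)$ freely, for instance making it linear from $0$ to $\psi(r)$. This upgrade of a monotone control function to a homeomorphism is the same normalization the paper already used silently in the proof of the second Lemma.

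For the implication (2)$\Rightarrow$(1), suppose $a\leq\varphi\circ b$ with $\varphi$ a homeomorphism. As in the proof of the second Lemma, we may enlarge $\varphi$ so that it is superadditive, $\varphi(u+v)\geq\varphi(u)+\varphi(v)$, and also satisfies $\varphi(t)\geq t$. This is the step needing the most care. A convex enlargement works, e.g. $\tilde\varphi(t)=\int_0^t \sup_{s\leq u+1}\varphi(s)\,du + t$, which is convex with $\tilde\varphi(0)=0$ and hence superadditive. Then $\min(a,b)\leq \min(\varphi(b),b)$, and we also have $\min(a,b)\geq$ something, but what we really need is $a\leq\varphi(\min(a,b))$ pointwise. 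This holds: if $\min(a,b)=a$ it follows from $\varphi(t)\geq t$, and if $\min(a,b)=b$ it is the hypothesis.

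Superadditivity then gives, for any chain $x=x_0,\dots,x_n=y$,
$$a(x,y)\leq\sum_i a(x_{i-1},x_i)\leq\sum_i\varphi(\min(a,b)(x_{i-1},x_i))\leq\varphi\Bigl(\sum_i\min(a,b)(x_{i-1},x_i)\Bigr).$$
Taking the infimum over chains, and using continuity and monotonicity of $\varphi$, gives $a\leq\varphi(a\lor b)$. Conversely, $a\lor b\leq a$ holds by the first Lemma, so the identity serves as the control function in the other direction. Hence $a\sim a\lor b$, i.e. $[a]\lor[b]=[a]$.

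I expect the main obstacle to be the justification that the control function can be taken superadditive with $\varphi(t)\geq t$. I would handle it explicitly with the convex majorant construction above, checking that it is finite, continuous and strictly increasing.
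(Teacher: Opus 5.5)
Your argument is the same as the paper's. For (1)$\Rightarrow$(2) the paper also uses $a\lor b\le b$ and monotonicity of the control function. For (2)$\Rightarrow$(1) it proves $\min(a,b)\ge\varphi^{-1}(a)$ and uses subadditivity of $\varphi^{-1}$ along chains. That is exactly your inequality $a\le\varphi(\min(a,b))$ combined with superadditivity of $\varphi$, read through the inverse. Your upgrade of the control function to a homeomorphism in (1)$\Rightarrow$(2) is correct.

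You go further than the paper by trying to justify the normalization in (2)$\Rightarrow$(1), and that step is wrong as written. Your $\tilde\varphi$ equals $\int_0^t\varphi(u+1)\,du+t$, since $\varphi$ is increasing. It is convex with $\tilde\varphi(0)=0$, hence superadditive. But it is $O(t)$ at $0$, so it need not dominate $\varphi$ on $(0,1)$; for example, $\varphi(t)=\sqrt t$ fails to be dominated there. No cleverer formula can fix this. Any superadditive $f\ge 0$ satisfies $f(1/n)\le f(1)/n$, so no superadditive function dominates $\sqrt t$ near $0$. The ``without loss of generality'' is therefore false on all of $[0,\infty)$, both in your proposal and in the paper, which asserts it without proof.

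The repair is the observation you already made in the other direction. Since $b$ is uniformly discrete with constant $r$, its values lie in $\{0\}\cup[r,\infty)$. So you only need $\tilde\varphi\ge\varphi$ on $[r,\infty)$:
\begin{itemize}
\item the case $\min(a,b)=a$ is covered by $\tilde\varphi(t)\ge t$;
\item $b(x,y)=0$ forces $a(x,y)=0$.
\end{itemize}
For instance, take $\tilde\varphi(t)=\max(1,r^{-1})\int_0^t\varphi(u+1)\,du+t$. It is still convex with $\tilde\varphi(0)=0$, and it dominates $\varphi$ on $[r,\infty)$. For $t\ge 1$, the integral over $[t-1,t]$ alone is at least $\varphi(t)$. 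For $r\le t<1$, the first term is at least $(t/r)\varphi(1)\ge\varphi(1)\ge\varphi(t)$. With this change your proof is complete.
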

\begin{proof}
First, suppose that (2) holds. Without loss of generality we may assume that $\varphi(t)\geq t$ for any $t\in[0,\infty)$ and that $\varphi(u+v)\geq \varphi(u)+\varphi(v)$ for any $u,v\geq 0$.
As 
$$
\min(a(x,y),b(x,y))\geq \min(a(x,y),\varphi^{-1}(a(x,y)))=\varphi^{-1}(a(x,y)),
$$
we have
\begin{eqnarray*}
a\lor b(x,y)&=&\inf\Bigl\{\sum_{i=1}^n \min(a(x_{i-1},x_i),b(x_{i-1},x_i))\Bigr\}\\
&\geq& \inf\Bigl\{\sum_{i=1}^n \varphi^{-1}(a(x,y))\Bigr\}\\
&\geq&\varphi^{-1}\Bigl(\sum_{i=1}^n a(x_{i-1},x_i)\Bigr)\\
&\geq&\varphi^{-1}(a(x,y))
\end{eqnarray*}
for any $x,y\in X$. On the other hand, $a\lor b(x,y)\leq a(x,y)$ holds for any $x,y\in X$ as well. Thus, $a\lor b\sim a$.

Second, suppose that (1) holds. Then there exists a homeomorphism $\varphi$ of $[0,\infty)$ such that $a(x,y)\leq \varphi(a\lor b(x,y))$ for any $x,y\in X$.
Then $b(x,y)\geq a\lor b(x,y)\geq\varphi^{-1}(a(x,y))$ for any $x,y\in X$.
\end{proof}
 
Thus, the structures of a join semilattice and of a partially ordered set on $M(X)$ agree.

Note that, unlike $\mathcal M(X)$, $M(X)$ has the greatest element.  
Let $d_1(x,y)=1$ for any $x\neq y\in X$, $\delta_1=[d_1]\in M(X)$. 

\begin{lem}
$\alpha\preceq\delta_1$ for any $\alpha\in M(X)$.

\end{lem}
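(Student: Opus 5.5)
Take an arbitrary $\alpha=[a]$ with $a\in\mathcal M(X)$ uniformly discrete. By the third lemma above, the claim $\alpha\preceq\delta_1$ means $[a]\lor[d_1]=[d_1]$. This in turn is equivalent to condition (2) with $d_1$ in the role of $a$ and $a$ in the role of $b$. So the plan is to produce a homeomorphism $\varphi$ of $[0,\infty)$ such that $d_1(x,y)\leq\varphi(a(x,y))$ for all $x,y\in X$.

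Uniform discreteness supplies $r>0$ with $a(x,y)\geq r$ whenever $x\neq y$. Take $\varphi(t)=t/r$, a linear homeomorphism of $[0,\infty)$. For $x=y$ both sides vanish. For $x\neq y$ we get $\varphi(a(x,y))=a(x,y)/r\geq 1=d_1(x,y)$. Hence condition (2) holds, and the lemma gives $[d_1]\lor[a]=[d_1]$, i.e.\ $\alpha\preceq\delta_1$.

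One could instead compute $a\lor d_1$ directly. Every summand of a chain from $x\neq y$ is at least $\min(r,1)$, so $a\lor d_1$ is bounded between $\min(r,1)$ and $1$ off the diagonal, and is therefore coarsely equivalent to $d_1$. The only point needing care is the direction of the order conventions ($a\preceq b$ iff $a\geq b$ pointwise), which must be matched with the orientation of the third lemma. Uniform discreteness is exactly what is needed, since without it $a$ could take arbitrarily small nonzero values and no such $\varphi$ with $\varphi(0)=0$ would exist.
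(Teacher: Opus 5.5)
Your proof is correct and is essentially the argument the paper dismisses as ``obvious'': uniform discreteness gives $d_1(x,y)\leq a(x,y)/r$, and the third lemma (applied with $d_1$ in the first slot and $a$ in the second) converts this into $[a]\lor[d_1]=[d_1]$, i.e.\ $\alpha\preceq\delta_1$. One small correction to your alternative direct computation: only the nondegenerate steps $x_{i-1}\neq x_i$ of a chain are bounded below by $\min(r,1)$, but a chain from $x\neq y$ contains at least one such step, so $a\lor d_1(x,y)\geq\min(r,1)$ still holds and that route also works.
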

\begin{proof}
Obvious.
\end{proof}

\begin{thm}
The smallest element in $M(X)$ exists iff $X$ is countable.
\end{thm}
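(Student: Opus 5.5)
For $\alpha=[a]$ and $\beta=[b]$, the relation $\alpha\preceq\beta$ means $[a]\lor[b]=[b]$. By the preceding lemma (with the roles of $a$ and $b$ exchanged) this holds iff $b(x,y)\leq\varphi(a(x,y))$ for some homeomorphism $\varphi$ of $[0,\infty)$. So a smallest element $[a]$ is exactly a uniformly discrete metric $a$ that dominates every uniformly discrete metric $b$ up to such a $\varphi$. The key property will be that all $a$-balls of finite radius are finite. I will prove both directions through this property.

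\emph{Countable case.} Enumerate $X=\{x_1,x_2,\ldots\}$; the finite case is the same with a finite list. Set $a(x_i,x_j)=\max(i,j)$ for $i\neq j$ and $a(x_i,x_i)=0$. Then $a(x_i,x_j)\leq\max(a(x_i,x_k),a(x_k,x_j))$ for all $i,j,k$, so $a$ is an ultrametric and in particular a metric. It is uniformly discrete with $r=1$. The set $\{(x,y): a(x,y)\leq t\}$ is finite for every $t$, because it lies in $\{x_1,\ldots,x_{\lfloor t\rfloor}\}^2$.

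Given any $b\in\mathcal M(X)$, put $\psi(t)=\max\{b(x,y): a(x,y)\leq t\}$. This is a finite maximum, and $\psi$ is monotone nondecreasing. Choose a homeomorphism $\varphi\geq\psi$ of $[0,\infty)$, for instance a piecewise linear strictly increasing majorant of $\psi(\lceil t\rceil)+t$, adjusted near $0$ so that $\varphi(0)=0$. The adjustment at $0$ is harmless, since the only pairs with $a(x,y)<1$ are the diagonal ones, where $b=0$. Then $b\leq\varphi(a)$, so $[a]\preceq[b]$ for every $[b]$, and $[a]$ is the smallest element.

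\emph{Uncountable case.} Suppose $[a]$ is smallest and fix $x_0\in X$. We have $X=\bigcup_n B_a(x_0,n)$, a countable union, so since $X$ is uncountable some ball $B=B_a(x_0,n)$ is infinite. Choose distinct points $y_1,y_2,\ldots\in B\setminus\{x_0\}$. Define $f(y_k)=k$ and $f=0$ elsewhere, and set $b(x,y)=|f(x)-f(y)|+1$ for $x\neq y$ and $b(x,x)=0$. This is a uniformly discrete metric: for distinct $x,y,z$ the triangle inequality holds because $|f(x)-f(z)|\leq|f(x)-f(y)|+|f(y)-f(z)|$ and the constant $1$ on the left is outweighed by $2$ on the right. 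But $b(x_0,y_k)=k+1\to\infty$ while $a(x_0,y_k)\leq n$, so no $\varphi$ can give $b\leq\varphi(a)$. Hence $[a]\not\preceq[b]$, a contradiction. So no smallest element exists.

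The only slightly delicate point is replacing the step function $\psi$ by a genuine homeomorphism in the countable case. This is routine: take a strictly increasing continuous majorant, which works because $\psi$ is finite everywhere.
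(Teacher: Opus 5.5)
Your proof is correct and follows essentially the paper's route (Lemmas \ref{L1} and \ref{L2}). In the uncountable case you find infinitely many points at bounded $a$-distance from a fixed base point, which avoids the paper's subsequence dichotomy, and build a metric blowing up on them. In the countable case your ultrametric $\max(i,j)$ is coarsely equivalent to the paper's $d_0(x_n,x_m)=|2^n-2^m|$, and you dominate an arbitrary $b$ by the same max-over-finitely-many-pairs construction of $\varphi$.

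One small imprecision: the property the countable case needs and uses is finiteness of the off-diagonal part of $\{(x,y):a(x,y)\leq t\}$. The set itself contains the whole diagonal, and mere finiteness of balls would not suffice, as $|i-j|$ on $\mathbb N$ shows. Your ultrametric has the off-diagonal property, and that is enough since $b$ vanishes on the diagonal.
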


The proof consists of the following Lemmas \ref{L1}, \ref{L2} and \ref{L3}.

\begin{lem}\label{L1}
Let $X$ be uncountable. Then there is no smallest element in $M(X)$.

\end{lem}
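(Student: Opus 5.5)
Note first that the order on $M(X)$ is reversed: $a\preceq b$ means $a\geq b$ pointwise. So a smallest element of $M(X)$ is a class $[a]$ with $[a]\preceq[b]$ for every uniformly discrete metric $b$. By the lemma characterizing $[a]\lor[b]=[b]$, this amounts to a homeomorphism $\varphi_b$ with $b(x,y)\leq\varphi_b(a(x,y))$ for all $x,y$. Informally, $a$ would have to be "coarsely largest", i.e. bounded sets of $a$ would be bounded for every metric. The plan is to assume such an $a$ exists for uncountable $X$ and build a metric $b$ that is not dominated by any $\varphi(a)$.

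The first step is to use uncountability to find an unbounded subset on which $a$ stays bounded. Fix a base point $x_0\in X$. Since $X=\bigcup_{n\in\mathbb N}\{x: a(x_0,x)\leq n\}$ is uncountable, some ball $B=\{x: a(x_0,x)\leq R\}$ is uncountable, and in particular infinite. On $B$ we have $a(x,y)\leq 2R$ for all $x,y\in B$.

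The second step is to construct a uniformly discrete metric $b$ that is unbounded on $B$. Choose distinct points $y_1,y_2,\ldots\in B$ and a function $f:X\to[0,\infty)$ with $f(y_n)=n$ and $f=0$ elsewhere. Set
$$
b(x,y)=|f(x)-f(y)|+1\quad\text{for } x\neq y,\qquad b(x,x)=0.
$$
This is a metric: the triangle inequality holds because the added constant $1$ is counted at least once on each side. It is uniformly discrete with $r=1$.

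The final step is to compare the two metrics. If $[a]\preceq[b]$, then for all $x,y\in B$
$$
b(x,y)\leq\varphi_b(a(x,y))\leq\varphi_b(2R).
$$
But $b(y_1,y_n)=n$ is unbounded, which is a contradiction. Hence $[a]$ is not smallest, and no smallest element exists.

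The only step needing care is the first one: the pigeonhole argument over countably many balls. It is exactly where uncountability enters, since for countable $X$ an enumeration would let one make $a(x_0,x_n)\to\infty$. The other steps are routine verifications.
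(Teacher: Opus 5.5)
Your proof is correct. It reaches the contradiction by a slightly different decomposition than the paper's.

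The paper pigeonholes on the entourages $E_N=\{(x,y):d(x,y)\leq N\}$ covering $X\times X$ and obtains infinitely many distinct pairs at bounded $d$-distance. That pigeonhole tacitly requires discarding the diagonal, since every $E_N$ contains $\Delta_X$. The paper must then pass to a subsequence that is either pairwise disjoint or a ``star'' $(x_n,y_0)$. Finally it asserts, without construction, a uniformly discrete metric that blows up along that subsequence.

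By pigeonholing on balls about a fixed base point instead, you get an uncountable set $B$ of $a$-diameter at most $2R$ directly. So only the star configuration $(y_1,y_n)$ ever occurs. Your explicit metric $b(x,y)=|f(x)-f(y)|+1$ then finishes the argument with no case split.

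Your route isolates the fact that a smallest class would have to have all balls finite, which is impossible on an uncountable set. The paper's pair-based route, once the diagonal is removed, gives a stronger necessary condition: every $E_N\setminus\Delta_X$ must be finite, i.e.\ $d$ must be proper and uniformly sparse. That is exactly the property Lemma~\ref{L3} uses to identify metrics in $\delta_0$ when $X$ is countable.
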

\begin{proof}
Suppose the contrary: there exists a metric $d\in\mathcal M(X)$ such that $[d]\preceq\alpha$ for any $\alpha\in M(X)$. Set $E_N=\{(x,y)\in X\times X:d(x,y)\leq N\}$. Then $X\times X=\cup_{N\in\mathbb N} E_N$ is uncountable, hence there exist $N\in\mathbb N$ such that $E_N$ is infinite. Then there exists a sequence $\{(x_n,y_n)\}_{n\in\mathbb N}$ consisting of different pairs, such that $d(x_n,y_n)\leq N$ for any $n\in\mathbb N$. Passing to a subsequence, we may assume that either all points $x_n$ and $y_n$ are different, or there is $y_0\in X$ such that $d(x_n,y_0)\leq N$ for any $n\in\mathbb N$. There exists a uniformly discrete metric $a$ on $X$ such that $\lim_{n\to\infty}a(x_n,y_n)=\infty$ in the first case, or $\lim_{n\to\infty}a(x_n,y_0)=\infty$. Then $[d]\preceq[a]$ fails.
\end{proof}

We say that a metric $a\in\mathcal M(X)$ is \emph{uniformly sparse} if $\lim_{R\to\infty}f_a(R)=\infty$, where $f_a(R)=\inf\{a(x,y):x\neq y\in X, x,y\notin B_R(x_0,a)\}$, $x_0\in X$. Clearly, this definition does not depend on a choice of $x_0$ and is coarsely equivalent. If $X$ is countable then uniformly sparse proper metrics exist. For example, enumerate the points of $X$: $X=\{x_n\}_{n\in\mathbb N}$, and set $d_0(x_n,x_m)=|2^n-2^m|$, $\delta_0=[d_0]$.

\begin{lem}\label{L2}
$\delta_0\preceq\alpha$ for any $\alpha\in M(X)$.

\end{lem}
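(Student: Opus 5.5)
The plan is to reduce the statement to condition (2) of the lemma characterizing $[a]\lor[b]=[a]$. Let $\alpha=[d]$ with $d\in\mathcal M(X)$. By the definition of the order on $M(X)$, $\delta_0\preceq\alpha$ means $[d]\lor[d_0]=[d]$. By that lemma, applied with $a=d$ and $b=d_0$, this is equivalent to the existence of a homeomorphism $\varphi$ of $[0,\infty)$ such that
$$
d(x,y)\leq\varphi(d_0(x,y))\quad\mbox{for any }x,y\in X.
$$
So the whole proof reduces to constructing such a $\varphi$.

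First I would check that $d_0\in\mathcal M(X)$. It is the pullback of the usual metric on $\mathbb R$ under the injective map $x_n\mapsto 2^n$, so it is a metric. Moreover $|2^n-2^m|\geq 1$ for $n\neq m$, so it is uniformly discrete.

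The key observation is a finiteness property. If $n\neq m$ and $k=\max(n,m)$, then
$$
|2^n-2^m|\geq 2^k-2^{k-1}=2^{k-1}.
$$
Hence for every $t\geq 0$ the set of pairs $(n,m)$ with $d_0(x_n,x_m)\leq t$ is finite, since both indices are at most $\log_2 t+1$. Therefore
$$
\psi(t)=\max\{d(x_n,x_m): d_0(x_n,x_m)\leq t\}
$$
is a finite, nondecreasing step function on $[0,\infty)$, with $\psi(t)=0$ for $t<1$. By construction $d(x,y)\leq\psi(d_0(x,y))$ for all $x,y\in X$.

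The only remaining (mild) step is to replace $\psi$ by a homeomorphism $\varphi\geq\psi$. I would take $\varphi$ to be the piecewise linear function with $\varphi(0)=0$ and $\varphi(k)=\psi(k+1)+k$ at the integers $k\geq 1$, linear in between. This $\varphi$ is continuous and strictly increasing, tends to $\infty$, and dominates $\psi$ on $[0,\infty)$ because $\psi$ is nondecreasing. This gives condition (2) of the lemma, hence $\delta_0\preceq\alpha$. There is no real obstacle here: the entire content is the finiteness of the $d_0$-balls of bounded radius in the set of pairs, which is exactly what the exponential growth $2^n$ guarantees.
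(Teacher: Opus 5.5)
Your proof is correct and follows essentially the paper's route. Like the paper, you reduce, via the lemma characterizing $[a]\lor[b]=[a]$, to finding a homeomorphism $\varphi$ with $d\le\varphi(d_0)$, and you exploit $d_0(x_i,x_m)\geq 2^{m-1}$ for $i<m$. Your $\psi$ together with the piecewise linear regularization actually supplies the monotonicity and homeomorphism details that the paper's definition $\varphi(2^{m-1})=\max\{a(x_i,x_m):i\leq m-1\}$ leaves implicit.

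One small slip: the set of pairs with $d_0(x_n,x_m)\leq t$ is finite only off the diagonal. Diagonal pairs contribute $d=0$, so $\psi$ is still finite-valued.
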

\begin{proof}
Let $a\in\mathcal M(X)$, $[a]=\alpha$. 
To show that $\delta_0\preceq\alpha$, we have to construct a homeomorphism $\varphi$ of $[0,\infty)$ such that $a(x_n,x_m)\leq\varphi(d_0(x_n,x_m))=\varphi(|2^n-2^m|)$. Define $\varphi(2^{m-1})$ by $\varphi(2^{m-1})=\max\{a(x_i,x_m):i\leq m-1\}$. Then $a(x_i,x_m)\leq \varphi(2^m-2^{m-1})\leq\varphi(2^m-2^i)=\varphi(d_0(x_i,x_m))$ for any $i<m$.
\end{proof} 

\begin{lem}\label{L3}
Let $X$ be countable, and let $d'\in\mathcal M(X)$ be a proper uniformly sparse metric. Then $d'\sim d_0$.

\end{lem}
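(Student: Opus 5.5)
By Lemma \ref{L2} we already have $[d_0]\preceq[d']$. In the paper's order, $[d_0]\preceq[d']$ means that $d'$ is bounded by a function of $d_0$ (cf. the third lemma of this section, with $a=d'$, $b=d_0$). So it remains to prove the reverse comparison $[d']\preceq[d_0]$. That is, I will construct a monotone function $\psi$ with $\psi(t)\to\infty$ such that
$$
d_0(x,y)\leq\psi(d'(x,y))\quad\mbox{for any } x,y\in X .
$$
Equivalently, I will show: for every $R>0$ there is $S(R)<\infty$ such that $d'(x,y)\leq R$ implies $d_0(x,y)\leq S(R)$. Given such $S$, I take $\psi$ to be a monotone homeomorphism majorizing $t\mapsto S(t+1)$. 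Together with Lemma \ref{L2}, the two inequalities give $d'\sim d_0$.

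Fix a base point $x_0$ and $R>0$. Since $d'$ is uniformly sparse, $f_{d'}(R')\to\infty$, so I can choose $R'$ with $f_{d'}(R')>R$. Then any two distinct points $x,y$ both lying outside $B_{R'}(x_0,d')$ satisfy $d'(x,y)>R$. Now take a pair with $x\neq y$ and $d'(x,y)\leq R$. By the choice of $R'$, at least one of $x,y$, say $x$, lies in $B_{R'}(x_0,d')$. By the triangle inequality, $y$ then lies in $B_{R'+R}(x_0,d')$.

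Since $d'$ is proper, the ball $B_{R'+R}(x_0,d')$ is finite. Hence the set of pairs with $d'(x,y)\leq R$ is contained in the finite set $B_{R'+R}(x_0,d')\times B_{R'+R}(x_0,d')$. I therefore set
$$
S(R)=\max\{d_0(x,y): x,y\in B_{R'+R}(x_0,d')\}<\infty .
$$
$S$ can be taken nondecreasing in $R$, since the balls grow with $R$. Then I replace $S$ by a strictly increasing continuous majorant tending to infinity, which gives the homeomorphism $\psi$.

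The only delicate point I expect is the correct use of sparseness. It must give that close pairs cannot both be far from $x_0$, which is exactly the statement $f_{d'}(R')>R$. Properness then converts this into finiteness. Everything else is routine bookkeeping to turn the finite bound $S(R)$ into a homeomorphism, as in the earlier lemmas.
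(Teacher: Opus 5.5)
Your proof is correct and takes essentially the paper's approach: uniform sparseness forces any pair with bounded $d'$-distance into a fixed $d'$-ball, and properness makes that ball finite, so $d_0$ stays bounded on such pairs. The paper argues this by contradiction via $\phi(t)=\inf\{d'(x,y):d_0(x,y)\geq t\}$, while you argue directly with $S(R)$, which is the contrapositive of the same argument.
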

\begin{proof}
We have already shown that $[d_0]\preceq[d']$, so it remains to show that $[d']\preceq[d_0]$.

Set $\phi(t)=\inf\{d'(x,y):d_0(x,y)\geq t\}$. Clearly, $\phi$ is monotonely non-decreasing, and $d'(x,y)\geq\phi(d_0(x,y))$ for any $x,y\in X$. 


It remains to show that $\lim_{t\to\infty}\phi(t)=\infty$. Suppose the contrary: there exists $C>0$ and a sequence $(x_{n_k},y_{n_k})_{k\in\mathbb N}$ such that $d_0(x_{n_k},y_{n_k})>n$ and $d'(x_{n_k},y_{n_k})\leq C$. Properness of $d'$, together with $d'(x_{n_k},y_{n_k})\leq C$, implies that both sequences go to infinity with respect to the metric $d'$. But the latter contradicts uniform sparseness of $d'$.
\end{proof}

Besides the join $\lor$, there is a meet operation $\land$ on $M(X)$. For $a,b\in\mathcal M(X)$ it is defined either by $a\land b(x,y)=\max(a(x,y),b(x,y))$ or $a\land b(x,y)=a(x,y)+b(x,y)$. Passing to the coarse equivalence classes, either of these formulas defines the meet $\land$ by $[a]\land[b]=[a\land b]$. Clearly, $\alpha\land\beta\preceq\alpha,\beta$ for any $\alpha,\beta\in M(X)$. Thus, $M(X)$ is a lattice. Note that this lattice is not distributive.

\section{Ideals in semilattices}

Given a join semilattice $L$, a subset $F\subset L$ is \emph{downwards closed} if $a\preceq b$ and $b\in F$ implies that $a\in F$. $F\subset L$ is a subsemilattice if $a,b\in L$ implies $a\lor b\in F$. 

A subset $F\subset F$ is an \emph{ideal} if $F$ is both downwards closed and a subsemilattice, \cite{book}.
Ideals are duals for filters.

Let $I(L)$ denote the set of all ideals of $L$.

For $a\in L$, set $F_a=\{b\in L:b\preceq a\}$. By transitivity of $\preceq$, $F_a$ is a downwards closed subset. Also if $b,c\preceq a$ then $b\lor c\preceq a$. This gives an inclusion $L\to I(L)$, $a\mapsto F_a$. Ideals $F_a$, $a\in L$, are called principal ideals, \cite{book}. 

Note that if $F,G\in I(L)$ then $F\cap G\in I(L)$.

Characteristic functions provide a one-to-one correspondence between subsets of $L$ and maps from $L$ to $\{0,1\}$, so $I(L)$ can be considered as a subset of $\{0,1\}^L$. Endow the latter with the product topology. 

The following is folklore.

\begin{lem}
The subset $I(L)\subset\{0,1\}^L$ is closed.

\end{lem}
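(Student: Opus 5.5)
We will show that the complement $\{0,1\}^L\setminus I(L)$ is open. Equivalently, we will write $I(L)$ as an intersection of closed sets, each defined by a condition that involves only finitely many coordinates. The topology is the product topology, so for each $a\in L$ the evaluation map $\chi\mapsto\chi(a)$ is continuous into the discrete space $\{0,1\}$. Hence any subset of $\{0,1\}^L$ cut out by a condition on finitely many coordinates is clopen.

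First, a subset $F$ (identified with its characteristic function $\chi$) is downwards closed iff for every pair $a\preceq b$ in $L$ we do not have $\chi(b)=1$ and $\chi(a)=0$. For a fixed pair $(a,b)$ with $a\preceq b$, set
$$
D_{a,b}=\{\chi:\chi(b)=0\}\cup\{\chi:\chi(a)=1\}.
$$
This set depends only on the coordinates $a$ and $b$, so it is clopen, and in particular closed. The set of downwards closed subsets is then the intersection of all the $D_{a,b}$, hence it is closed.

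Second, $F$ is a subsemilattice iff for all $a,b\in L$ we have that $\chi(a)=\chi(b)=1$ implies $\chi(a\lor b)=1$. For fixed $a,b$, set
$$
S_{a,b}=\{\chi:\chi(a)=0\}\cup\{\chi:\chi(b)=0\}\cup\{\chi:\chi(a\lor b)=1\}.
$$
This set depends only on the three coordinates $a$, $b$, $a\lor b$, so it is again clopen. The set of subsemilattices is the intersection of all the $S_{a,b}$, hence it is closed.

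Therefore $I(L)=\bigcap_{a\preceq b}D_{a,b}\cap\bigcap_{a,b}S_{a,b}$ is an intersection of closed sets, and so it is closed.

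There is one caveat. If the definition of an ideal is meant to require nonemptiness, the empty set is the zero function and lies in the closure of the sets above, so closedness would then fail at $\emptyset$. Under the paper's definition, which imposes no nonemptiness condition, the argument is complete, and this is the convention I will adopt.

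The only step that needs any care is the observation that each defining condition involves finitely many points of $L$. Once that is noted, everything else is routine.
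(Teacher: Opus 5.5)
Your proof is correct and is the paper's argument in dual form: the complements of your clopen sets $D_{a,b}$ and $S_{a,b}$ are exactly the basic open sets $U_{b;a}$ and $U_{a,b;a\lor b}$ that the paper uses to show that every non-ideal has a neighbourhood disjoint from $I(L)$. Your closing caveat is not right in general, though: whether $\emptyset$ is a limit of nonempty ideals depends on $L$. For instance, when $L$ has a least element $\delta_0$, the neighbourhood $\{\chi:\chi(\delta_0)=0\}$ of $\emptyset$ contains no nonempty ideal (the paper proves this isolation later), so the nonempty ideals would still form a closed set.
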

\begin{proof}
We have to show that the complement to $I(L)$ is an open set. For $G\in \{0,1\}^L$, the base of the product topology consists of the sets 
$$
U_{a_1,\ldots,a_n;b_1,\ldots,b_m}(G)=\{F\in \{0,1\}^L: a_1,\ldots,a_n\in F,b_1,\ldots,b_m\notin F\},
$$ 
where $a_i\in G$, $b_j\notin G$, $i=1,\ldots,n$, $j=1,\ldots,m$.

If $G\notin I(L)$ then there are two possibilities: 
\begin{enumerate}
\item
there exist $b\preceq a$ with $b\notin G$, $a\in G$;
\item
there exist $a,b\in G$ such that $a\lor b\notin G$.
\end{enumerate}
In the case (1), $U_{a;b}(G)\cap I(L)=\emptyset$, in the case (2), $U_{a,b;a\lor b}(G)\cap I(L)=\emptyset$.
\end{proof} 

\begin{cor}
$I(L)$ is a compact Hausdorff space.

\end{cor}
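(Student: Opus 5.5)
The corollary follows directly from the preceding lemma together with two standard facts about product spaces. The plan is to exhibit $I(L)$ as a closed subspace of a compact Hausdorff space.

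First, $\{0,1\}$ with the discrete topology is compact and Hausdorff. By Tychonoff's theorem the product $\{0,1\}^L$ is compact. It is also Hausdorff, since any product of Hausdorff spaces is Hausdorff: if $F\neq G$, choose $a\in L$ on which the characteristic functions differ, say $a\in F\setminus G$. Then the basic open sets $U_{a;}(F)$ and $U_{;a}(G)$ (in the notation of the preceding proof) are disjoint neighbourhoods of $F$ and $G$.

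Second, by the preceding lemma $I(L)$ is closed in $\{0,1\}^L$. A closed subset of a compact space is compact: an open cover of $I(L)$, together with the open complement of $I(L)$, covers $\{0,1\}^L$, so it has a finite subcover. A subspace of a Hausdorff space is Hausdorff, because we can intersect the separating neighbourhoods with $I(L)$.

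Hence $I(L)$, with the subspace topology, is compact Hausdorff. There is no real obstacle here. The only nontrivial input is Tychonoff's theorem, which in this case is the compactness of the Cantor cube $\{0,1\}^L$ for an arbitrary, possibly uncountable, index set $L$. Compactness is the only place where anything beyond elementary topology enters. Hausdorffness, by contrast, needs neither Tychonoff's theorem nor the preceding lemma.
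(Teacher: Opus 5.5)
Your proposal is correct and is the argument the paper intends. The paper states the corollary without proof immediately after the lemma that $I(L)$ is closed in $\{0,1\}^L$, and it implicitly relies on Tychonoff's theorem together with the fact that a closed subspace of a compact Hausdorff space is compact Hausdorff, exactly as you spell out.
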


\begin{lem}
If $L$ has the smallest element $\delta_0$ then $\emptyset\in I(L)$ is an isolated point.

\end{lem}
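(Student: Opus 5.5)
The plan is to produce an explicit basic open neighbourhood of $\emptyset$ in the product topology on $\{0,1\}^L$ that meets $I(L)$ only in $\emptyset$. I will use the description of the base from the proof of the preceding lemma, namely the sets $U_{a_1,\ldots,a_n;b_1,\ldots,b_m}(G)$. Since $\emptyset$ contains no elements, the relevant basic neighbourhoods of $\emptyset$ have no $a_i$'s and only excluded elements $b_j$. The natural choice is
$$
U=U_{;\delta_0}(\emptyset)=\{F\in\{0,1\}^L:\delta_0\notin F\}.
$$
This set is open in the product topology because it is a cylinder set determined by a single coordinate.

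Next I will show $U\cap I(L)=\{\emptyset\}$. Clearly $\emptyset\in U$, and $\emptyset$ is an ideal: the conditions of being downwards closed and closed under $\lor$ hold vacuously. Now let $F\in I(L)$ with $F\neq\emptyset$, and pick some $b\in F$. Since $\delta_0$ is the smallest element, $\delta_0\preceq b$. Because $F$ is downwards closed, this gives $\delta_0\in F$, so $F\notin U$. Hence $\emptyset$ is the only ideal in $U$, and $\emptyset$ is isolated in $I(L)$ with the subspace topology.

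There is no real obstacle here. The only point needing care is the convention that $\emptyset$ counts as an ideal; the definition of ideal in the paper imposes no nonemptiness condition, so I take that convention.
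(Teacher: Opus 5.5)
Your proof is correct and matches the paper's argument: both take the basic neighbourhood $U_{;\delta_0}(\emptyset)$ and note that a nonempty ideal contains some $b$ with $\delta_0\preceq b$, hence contains $\delta_0$ by downward closedness. Your remark that $\emptyset$ is itself an ideal (vacuously) makes explicit a point the paper leaves implicit.
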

\begin{proof}
Let $U_{;\delta_0}(\emptyset)=\{G\in I(L):\delta_0\notin G\}$ be a basic open set, and let $F\in I(L)$ is non-empty. Let $\alpha\in F$. Suppose that $F\in U_{;\delta_0}(\emptyset)$. As $\delta_0\preceq\alpha$ and $\alpha\in F$ then $\delta_0\in F$, but this contradicts $F\in U{;\delta_0}(\emptyset)$.  
\end{proof}

\begin{lem}
If $L$ is a lattice and has the smallest element $\delta_0$ then $I(L)\setminus\{\emptyset\}$ is a compactification for $L$.

\end{lem}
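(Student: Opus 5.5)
We have to show three things: the map $a\mapsto F_a$ embeds $L$ into $I(L)\setminus\{\emptyset\}$, its image is dense there, and $I(L)\setminus\{\emptyset\}$ is compact Hausdorff. Compactness is almost immediate. By the Corollary, $I(L)$ is compact Hausdorff. By the previous Lemma, $\emptyset$ is an isolated point, so $\{\emptyset\}$ is open and $I(L)\setminus\{\emptyset\}$ is a closed subset of a compact Hausdorff space, hence compact Hausdorff.

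For injectivity, note that $F_a=F_b$ gives $a\preceq b$ and $b\preceq a$, so $a=b$.

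For density, take a nonempty ideal $G$ and a basic neighbourhood $U_{a_1,\ldots,a_n;b_1,\ldots,b_m}(G)$ with $a_i\in G$ and $b_j\notin G$. We may assume $n\geq 1$: if $n=0$, pick any $c\in G$ (possible since $G\neq\emptyset$) and add it. Set $c=a_1\lor\cdots\lor a_n\in G$. Then $a_i\in F_c$ for all $i$. If $b_j\in F_c$, then $b_j\preceq c\in G$, and downward closedness forces $b_j\in G$, a contradiction. So $F_c\in U$, and principal ideals are dense.

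The remaining point is that $a\mapsto F_a$ is a homeomorphism onto its image, where $L$ carries the topology induced from $I(L)$. Here the lattice hypothesis is used: the meet gives, for $c\in L$, a topology whose basic sets are the intervals where $a_i\preceq x$ and $b_j\not\preceq x$, and with respect to it $x\mapsto F_x$ is tautologically continuous and open onto its image. I expect the main obstacle to be interpretive: the statement does not specify a topology on $L$, so the proof must fix one. The natural choice is the topology pulled back from $I(L)$, under which the embedding is automatic, and the substance of the lemma is then density together with removing the isolated point $\emptyset$, which is where the smallest element $\delta_0$ is used.
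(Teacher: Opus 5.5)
Your proof is correct, and it differs from the paper's at the one step that matters.

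For a basic neighbourhood $U_{\alpha_1,\ldots,\alpha_n;\beta_1,\ldots,\beta_m}(F)$ with $n\geq 1$, the paper takes $\gamma=\alpha_1\land\cdots\land\alpha_n$. It checks only that $\gamma\in F$ and $\beta_j\notin F_\gamma$. But $F_\gamma\in U$ also requires $\alpha_i\in F_\gamma$, i.e. $\alpha_i\preceq\gamma$. Since $\gamma\preceq\alpha_i$, this forces $\alpha_i=\gamma$ for every $i$. So, with the paper's own definitions ($F_a$ the down-set of $a$, ideals closed under $\lor$), the meet does not work.

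Your choice $c=a_1\lor\cdots\lor a_n$ is the right one:
\begin{itemize}
\item $c\in G$ because ideals are closed under joins;
\item $a_i\preceq c$ gives $a_i\in F_c$;
\item downward closedness of $G$ excludes each $b_j$ from $F_c$.
\end{itemize}
For the case $n=0$, the paper uses $F_{\delta_0}$ and you adjoin an arbitrary element of $G$; both are fine.

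What your route buys is that the meet, and hence the lattice hypothesis, is never needed. A join semilattice with a smallest element suffices, and $\delta_0$ is used only to make $\emptyset$ isolated. That is needed for compactness of $I(L)\setminus\{\emptyset\}$, not for density. You also verify compactness and injectivity of $a\mapsto F_a$ explicitly, which the paper leaves implicit.

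Your last paragraph's claim that the meet is used to obtain the embedding is spurious. With the topology on $L$ pulled back from $I(L)$, the basic open sets are $\{x\in L: a_i\preceq x \text{ for all } i,\ b_j\not\preceq x \text{ for all } j\}$. The condition on the $a_i$ amounts to $a_1\lor\cdots\lor a_n\preceq x$, so it involves the join, not the meet. The map $x\mapsto F_x$ is then a homeomorphism onto its image simply because it is injective, so you can drop that sentence.
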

\begin{proof}
We have to show that $L$ is dense in $I(L)\setminus\{\emptyset\}$.

Let $F\in I(L)$, $F\neq\emptyset$, and let $U(F)$ be a basic open neighborhood of $F$. We have to show that there exists $\gamma\in L$ such that $F_\gamma\in U(F)$. First, consider $U(F)=U_{;\beta_1,\ldots,\beta_m}(F)$, where $b_j\notin F$, $j=1,\ldots,m$. Note that $\delta_0\in F$. Indeed, as $F$ is not empty, there exists $\alpha\in F$. Then $\delta_0\preceq\alpha$, hence $\delta_0\in F$. Then $F_{\delta_0}\in U_{;\beta_1,\ldots,\beta_m}(F)$.

Second, consider $U(F)=U_{\alpha_1,\ldots,\alpha_n;\beta_1,\ldots,\beta_m}(F)$, where $\alpha_i\in F$, $\beta_1,\ldots,\beta_m\notin F$ ($m$ can be zero here, but $n\geq 1$). Set $\gamma=\alpha_1\land\cdots\land\alpha_n$. As $\gamma\preceq\alpha_1$, $\gamma\in F$. Clearly, $\beta_j\notin F_\gamma$, $j=1,\ldots,m$. Otherwise we would have $\beta_j\preceq\gamma$, hence $\beta_j\preceq a_1$, hence $\beta_j\in F$. Therefore, $F_\gamma\in U_{\alpha_1,\ldots,\alpha_n;\beta_1,\ldots,\beta_m}(F)$.  \end{proof}

If $a\preceq b$ then $F_b\subset F_a$, and thus the partial order can be extended to $I(L)$ by setting $F\preceq F'$ if $F'\subset F$.

Note that if $\delta_0$ is the smallest element in $L$ then $F_{\delta_0}$ is the unique ideal in $L$ consisting of one element.

\section{Ideals from coarse structures}

Many ideals in $M(X)$ come from coarse structures. Our reference for coarse structures is \cite{Roe}. Recall that a coarse structure $\mathcal E$ on $X$ is a family of subsets of $X\times X$ such that
\begin{enumerate}
\item
the diagonal $\Delta_X=\{(x,x):x\in X\}$ lies in $\mathcal E$;
\item
$\mathcal E$ is closed under taking subsets, finite unions and inverses, where $E^{-1}=\{(y,x):(x,y)\in E\}$, $E\subset X\times X$;
\item
$\mathcal E$ is closed under finite compositions, where $E\circ G=\{(x,z):(x,y)\in E, (y,z)\in G \mbox{\ for\ some\ }y\in X\}$, $E,G\subset X\times X$.
\end{enumerate}

For a metric $d$ set $E_R(d)=\{(x,y)\in X\times X:d(x,y)\leq R\}$. Then the class of coarse equivalence of $d$ determines a coarse structure $\mathcal E_d$ on $X$: $E\in\mathcal E_d$ if there exists $R>0$ such that $E\subset E_R(d)$. A coarse structure is metrizable iff it is countably generated.

A metric $d$ on $X$ is dominated by a coarse structure $\mathcal E$ if $E_R(d)\in\mathcal E$ for any $R>0$. This obviously passes to the coarse equivalence classes of metrics.

Let $F^\mathcal E$ denote the set of equivalence classes of metrics on $X$ dominated by $\mathcal E$.

\begin{lem}
$F^\mathcal E$ is an ideal for any coarse structure $\mathcal E$.

\end{lem}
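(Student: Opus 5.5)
We have to verify two things: that $F^\mathcal E$ is downwards closed, and that it is closed under $\lor$. Before either, we note that domination is a property of the class $[d]$. If $d\sim d'$ with $d'\le\varphi(d)$, then $E_R(d')\subset E_{\varphi(R)}(d)$. Since $\mathcal E$ is closed under subsets, $E_R(d')\in\mathcal E$ whenever all $E_S(d)$ lie in $\mathcal E$.

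\emph{Downwards closedness.} Let $[a]\preceq[b]$ with $[b]\in F^\mathcal E$. By the lemma characterizing $[a]\lor[b]=[b]$ (with the roles of the metrics arranged accordingly), there is a homeomorphism $\varphi$ of $[0,\infty)$ with $b(x,y)\le\varphi(a(x,y))$ for all $x,y$. Then $a(x,y)\le R$ implies $b(x,y)\le\varphi(R)$, so $E_R(a)\subset E_{\varphi(R)}(b)\in\mathcal E$. Hence $E_R(a)\in\mathcal E$, since $\mathcal E$ is closed under subsets.

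\emph{Closedness under $\lor$.} This is the main step. Let $a,b$ be dominated by $\mathcal E$ and fix $R>0$. Both metrics are uniformly discrete, so choose $r>0$ with $a(x,y)\ge r$ and $b(x,y)\ge r$ for all $x\ne y$. If $a\lor b(x,y)\le R$ with $x\neq y$, pick a chain $x=x_0,\dots,x_n=y$ with $\sum_i\min(a(x_{i-1},x_i),b(x_{i-1},x_i))\le R+1$. Deleting repeated consecutive points, each step contributes at least $r$, so $n\le N:=\lfloor (R+1)/r\rfloor$. Each step also has length at most $R+1$. Hence every pair $(x_{i-1},x_i)$ lies in
$$
G=E_{R+1}(a)\cup E_{R+1}(b)\cup\Delta_X,
$$
and this set belongs to $\mathcal E$. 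It follows that $E_R(a\lor b)\subset G\circ G\circ\cdots\circ G$ ($N$ factors), where padding with diagonal steps handles shorter chains. This composition lies in $\mathcal E$ because $\mathcal E$ is closed under finite unions and compositions, so $E_R(a\lor b)\in\mathcal E$.

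The only real obstacle is the bounded chain length. Without it, $E_R(a\lor b)$ would be an infinite union of compositions, which need not lie in $\mathcal E$. Uniform discreteness of the metrics in $\mathcal M(X)$ is exactly what bounds $n$, and it is the reason the paper restricts to $\mathcal M(X)$.

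Nonemptiness of $F^\mathcal E$ is not part of the definition of an ideal used here. If it is wanted, it follows when $X$ is countable: the smallest element $\delta_0$ of Lemma \ref{L2} lies in $F^\mathcal E$ whenever $\mathcal E$ contains all finite sets.
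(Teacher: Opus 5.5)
Your proof is correct and follows essentially the same route as the paper. Downward closure comes from an inclusion $E_R(a)\subset E_{\varphi(R)}(b)$. Closure under $\lor$ comes from a uniform bound on chain length, supplied by uniform discreteness, which places $E_R(a\lor b)$ inside a finite composition of $E_{R+1}(a)\cup E_{R+1}(b)$ (the paper uses $\cup_{j\le C_R}E^j$ where you pad with $\Delta_X$). One small slip: in your preliminary remark, the inclusion $E_R(d')\subset E_{\varphi(R)}(d)$ follows from $d\le\varphi(d')$, not from $d'\le\varphi(d)$; both inequalities hold when $d\sim d'$, so the remark still stands.
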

\begin{proof}
Let $a,b\in\mathcal M(X)$, $\alpha=[a]$, $\beta=[b]$. If $\alpha\preceq\beta$ and $b$ is dominated by $\mathcal E$ then $E_R(a)\subset E_R(b)$, hence $a$ is dominated by $\mathcal E$.

Let $a,b\in\mathcal M(X)$. As they are uniformly discrete, for any $R>0$ there exists $C_R>0$ such that for any $x,y$ with $a\lor b(x,y)\leq R$ there exists a sequence $x=x_0,x_1,\ldots,x_n=y$ with $n\leq C_R$ and $\min(a(x_{i-1},x_i),b(x_{i-1},x_i))<R+1$, $i=1,\ldots,n$.

Set $E=E_{R+1}(a)\cup E_{R+1}(b)$. Then $E\in\mathcal E$. If $(x_{i-1},x_i)\in E$ then $\min(a(x_{i-1},x_i),b(x_{i-1},x_i))\leq R+1$. If $a\lor b(x,y)\leq R$ then $(x,y)\in E\circ\cdots\circ E$ ($n$ times), hence $(x,y)\in\cup_{j=1}^{C_R} E^j\in \mathcal E$. Therefore, $E_R(a\lor b)\in\mathcal E$, so $a\lor b$ is dominated by $\mathcal E$.
\end{proof}

Denote the set of all coarse structures on $X$ by $CS(X)$.

Let $F\in I(M(X))$. Define $\Phi(F)\in CS(X)$ as the coarse structure generated by all $\mathcal E_d$, $[d]\in F$.
For $\mathcal E\in CS(X)$ set $\Psi(\mathcal E)=F^\mathcal E$. Thus, we have maps $\Phi:I(M(X))\to CS(X)$ and $\Psi:CS(X)\to I(M(X))$.

\begin{thm}
$\Phi\circ\Psi$ is the identity map on $CS(X)$; $\Psi\circ\Phi$ is not the identity map on $I(M(X))$.

\end{thm}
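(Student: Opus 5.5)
The theorem has two parts: the equality $\Phi(\Psi(\mathcal E))=\mathcal E$, and an explicit ideal $F$ with $\Psi(\Phi(F))\neq F$.

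\emph{First part.} The inclusion $\Phi(F^{\mathcal E})\subset\mathcal E$ is immediate. If $[d]\in F^{\mathcal E}$, then every $E_R(d)$ lies in $\mathcal E$, so $\mathcal E_d\subset\mathcal E$. Since $\Phi(F^{\mathcal E})$ is the coarse structure generated by these $\mathcal E_d$, it is contained in $\mathcal E$.

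For the reverse inclusion, fix $E\in\mathcal E$. I will construct a metric $d$ with $E\subset E_1(d)$ and $E_R(d)\in\mathcal E$ for every $R$; then $[d]\in F^{\mathcal E}$ and $E\in\mathcal E_d\subset\Phi(F^{\mathcal E})$. Put $G=E\cup E^{-1}\cup\Delta_X$, which lies in $\mathcal E$. Inside a $G$-component, let $d(x,y)$ be the least $k$ with $(x,y)\in G^k$. This is a uniformly discrete path metric with $E_R(d)\subset G^{\lfloor R\rfloor}\in\mathcal E$. Points in different components have to be assigned finite distances that grow fast enough. I will enumerate the components and use a $d_0$-type weight, as in Lemma \ref{L2}, so that for each $R$ only finitely many cross-component pairs have distance at most $R$. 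Then $E_R(d)$ is a set in $\mathcal E$ together with finitely many extra pairs.

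\textbf{Main obstacle.} The difficulty sits exactly here. The finitely many extra pairs lie in $\mathcal E$ only when the coarse structure is connected. Also, finitely many cross pairs per radius is achievable only when there are countably many components, in the spirit of Lemma \ref{L1}. I therefore expect that this part of the statement must be read with these implicit standing assumptions, or else with metrics allowed to take the value $\infty$ between components. Under the latter reading the construction above works verbatim, by setting $d=\infty$ across components.

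\emph{Second part.} Note first that $F\subset\Psi(\Phi(F))$ always holds, so it suffices to exhibit an ideal $F$ and a class in $\Psi(\Phi(F))\setminus F$. Take $X=\mathbb N$ and let $e(i,j)=|i-j|$. For $m\in\mathbb N$, let $d_m$ be the path metric on the graph with an edge between $i$ and $i+1$ of length $1$ if $i$ and $i+1$ lie in the same block $[k2^m,(k+1)2^m]$, and of length $k$ otherwise.

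\textbf{Properties to check.} First, the blocks are nested, so $[d_m]\preceq[d_{m+1}]$, since $d_{m+1}\leq d_m$. Consequently $F=\bigcup_m F_{[d_m]}$ is an increasing union of principal ideals and hence an ideal. Second, $d_m\geq e$, so every $E_R(d_m)\in\mathcal E_e$. On the other hand, each pair $(i,i+1)$ has $d_m(i,i+1)=1$ for some $m$, and every bounded set of $\mathcal E_e$ is contained in a finite power of $\{(i,i+1)\}^{\pm1}\cup\Delta$. Hence $\Phi(F)=\mathcal E_e$. It follows that $[e]\in F^{\mathcal E_e}=\Psi(\Phi(F))$.

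\textbf{Why $[e]\notin F$.} Membership would mean $[e]\preceq[d_m]$ for some $m$, i.e. $d_m\leq\varphi(e)$ for a homeomorphism $\varphi$. This fails because $d_m\bigl(k2^m,k2^m+1\bigr)=k\to\infty$ while $e$ equals $1$ on these pairs. Therefore $\Psi\circ\Phi(F)\neq F$.
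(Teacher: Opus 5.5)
\textbf{The second part contains a genuine error.} You deduce $\Phi(F)=\mathcal E_e$ from the fact that each single pair $(i,i+1)$ has $d_m(i,i+1)=1$ for some $m$. But coarse structures are closed only under \emph{finite} unions. Since $F$ is the union of the chain $F_{[d_m]}$, one has $\Phi(F)=\bigcup_m\mathcal E_{d_m}$. The entourage $E_1(e)=\{(i,j):|i-j|\leq 1\}$ lies in $\mathcal E_{d_m}$ only if $\sup_i d_m(i,i+1)<\infty$. The unboundedness of the boundary edges, which you use to show $[e]\notin F$, shows that this fails for every $m$. So $E_1(e)\notin\Phi(F)$ and $[e]\notin\Psi(\Phi(F))$.

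More generally, $\Phi(F)=\bigcup_{\alpha\in F}\mathcal E_\alpha$ for any ideal. So a class whose coarse structure is generated by one entourage, such as the graph metric $e$, lies in $\Psi(\Phi(F))$ only if it already lies in $F$, and can never serve as the witness. There is also a smaller problem with the blocks. With closed blocks $[k2^m,(k+1)2^m]$, every consecutive pair lies in a common block, so $d_m=e$. You need half-open blocks, and then the long edges are $(k2^m-1,k2^m)$, not $(k2^m,k2^m+1)$.

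The missing idea is the paper's. The structure $\bigcup_m\mathcal E_{d_m}$ is countably generated and connected, hence equals $\mathcal E_D$ for some metric $D$. Therefore $\Psi(\Phi(F))=F^{\mathcal E_D}=F_{[D]}$ is principal. On the other hand, $F$ is not principal, because your chain is strictly increasing: the edge ending at $(2k+1)2^m$ has $d_m$-length growing with $k$ but $d_{m+1}$-length $1$. If $F=F_\alpha$, then $\alpha\preceq[d_n]$ and $[d_{n+1}]\preceq\alpha$ for some $n$, contradicting strictness. With this repair your example becomes an instance of Lemma \ref{L5}. As there, $X$ must be infinite: for finite $X$, $M(X)$ is a single point and $\Psi\circ\Phi$ is the identity. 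Note also that you only treat $X=\mathbb N$.

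\textbf{First part.} Your route is the paper's: produce $d$ with $[d]\in F^{\mathcal E}$ and $E\in\mathcal E_d$. You replace the citation \emph{countably generated implies metrizable} by an explicit metric. Your caveat is well founded, and it applies equally to the paper's Lemma \ref{L4}. The structure generated by $E,E\circ E,\dots$ need not be connected, so it need not be $\mathcal E_d$ for a finite-valued $d$. Indeed, $\Psi(\mathcal E)=\emptyset$, and hence $\Phi(\Psi(\mathcal E))=\{E:E\subset\Delta_X\}$, in two situations:
\begin{itemize}
\item for every disconnected $\mathcal E$;
\item for uncountable $X$ and the connected structure $\{E:E\setminus\Delta_X\text{ finite}\}$, by the counting argument of Lemma \ref{L1}.
\end{itemize}

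However, your intermediate claim that only finitely many cross-component pairs have distance at most $R$ is false in general. If $p$ has infinitely many $G$-neighbours $x$, then $d(x,q)\leq 1+d(p,q)$ for every $q$ in another component. What does hold suffices when $\mathcal E$ is connected and $G$ has countably many components. Join base points $p_n$ of the components by edges of length $2^n$ and take the path metric. Then $E_R(d)\subset(G\cup P_R\cup P_R^{-1})^{\lfloor R\rfloor}$ with $P_R$ finite, and this set lies in $\mathcal E$.
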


The proof consists of the following Lemmas \ref{L4} and \ref{L5}

\begin{lem}\label{L4}
$\Phi\circ\Psi$ is the identity map on $CS(X)$.

\end{lem}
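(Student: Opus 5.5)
The plan is to prove the two inclusions $\Phi(\Psi(\mathcal E))\subset\mathcal E$ and $\mathcal E\subset\Phi(\Psi(\mathcal E))$ for a fixed coarse structure $\mathcal E$. Recall that $\Phi(\Psi(\mathcal E))$ is the coarse structure generated by all $\mathcal E_d$ with $[d]\in F^{\mathcal E}$, that is, with $d$ dominated by $\mathcal E$.

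For the first inclusion, let $d$ be dominated by $\mathcal E$. Every $E\in\mathcal E_d$ satisfies $E\subset E_R(d)$ for some $R$, and $E_R(d)\in\mathcal E$ by domination. Since $\mathcal E$ is closed under subsets, $\mathcal E_d\subset\mathcal E$. The coarse structure generated by all such $\mathcal E_d$ is the smallest coarse structure containing them, so it is contained in $\mathcal E$.

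For the reverse inclusion, fix $E\in\mathcal E$. I will build a uniformly discrete metric $d$ dominated by $\mathcal E$ with $E\subset E_1(d)$; then $E\in\mathcal E_d\subset\Phi(\Psi(\mathcal E))$. Set $G=E\cup E^{-1}\cup\Delta_X$, which lies in $\mathcal E$ and is symmetric. For $x\neq y$, let
$$
d(x,y)=\min\{n\geq 1:(x,y)\in G^n\},
$$
where $G^n$ is the $n$-fold composition, and set $d(x,x)=0$. This is the path metric of the graph with edge set $G$. Symmetry of $G$ gives symmetry of $d$, and composition of chains gives the triangle inequality. The metric is integer valued with $d(x,y)\geq 1$ for $x\neq y$, so it is uniformly discrete. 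Clearly $E\subset G=E_1(d)$. Moreover, for every $R$ we have $E_R(d)\subset G^{\lfloor R\rfloor}$ (with $G^0=\Delta_X$), and $G^{\lfloor R\rfloor}\in\mathcal E$ because $\mathcal E$ is closed under finite compositions and contains the diagonal. Hence $d$ is dominated by $\mathcal E$, so $[d]\in F^{\mathcal E}$.

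The main obstacle is finiteness of $d$: $d(x,y)<\infty$ requires $x$ and $y$ to be joined by a finite $G$-chain. If $\mathcal E$ is connected, i.e.\ every singleton $\{(x,y)\}$ lies in $\mathcal E$, I will replace $G$ by $G\cup\{(x,y),(y,x)\}$ as needed. Concretely, fix a base point $x_0$ and take the union of $E\cup E^{-1}\cup\Delta_X$ with the graph's edges to $x_0$ only when they are needed. This must be done so that every $E_R(d)$ remains a finite union of compositions of elements of $\mathcal E$. A cleaner alternative is to assign weight $1$ to pairs in $G$ and weight $k$ to the pairs $(x_k,x_0)$ for an enumeration of representatives, and check that each $E_R(d)$ uses only finitely many non-$G$ edges.

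In general, if $\mathcal E$ is not connected, then any finite metric $d$ has a cross-component pair in $E_R(d)$ for large $R$, and such a pair is not in $\mathcal E$. So the statement implicitly assumes coarse structures are connected, as metric ones are, and I will make this assumption explicit. Under it, the chain metric is finite, and the argument above gives $\mathcal E\subset\Phi(\Psi(\mathcal E))$.
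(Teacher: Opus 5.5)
\textbf{Verdict: genuine gap.} Your first inclusion is correct. So is your check that the chain metric, when it is finite, is uniformly discrete, satisfies $E\subset E_1(d)$, and is dominated by $\mathcal E$. The paper proves the reverse inclusion differently: it takes the structure generated by $E,E\circ E,\ldots$ and invokes ``countably generated implies metrizable''. Your path metric is what that invocation would produce. The finiteness problem you raise is exactly the connectedness hypothesis the paper omits; for $E=\Delta_X$ the generated structure is not metrizable at all. So you are right to flag it. The gap is your claim that connectedness of $\mathcal E$ fixes it.

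Connectedness of $\mathcal E$ does not make the graph with edge set $G$ connected, so the chain metric is still infinite in general. Your repair, edges $(x_k,x_0)$ of weight $k$, works only when $G$ has countably many components, since only then does each $E_R(d)$ use finitely many added edges. So your argument is complete for countable $X$, but not in general. If $X$ is uncountable (for instance $E=\Delta_X$), there are uncountably many components. Any assignment of finite weights then puts infinitely many added edges below some bound $N$, and $E_N(d)$ need not lie in $\mathcal E$.

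This cannot be repaired, because the statement itself fails. Let $X$ be uncountable and let $\mathcal E$ consist of all $E\subset X\times X$ with $E\setminus\Delta_X$ finite. This is a connected coarse structure. For any finite-valued metric $d$ we have $X\times X=\bigcup_{n\in\mathbb N}E_n(d)$, so some $E_n(d)$ has infinitely many off-diagonal points, and no metric is dominated by $\mathcal E$. Hence $\Psi(\mathcal E)=F^{\mathcal E}=\emptyset$. Then $\Phi(\emptyset)$ is the structure of subsets of $\Delta_X$, which is not $\mathcal E$.

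The right extra hypothesis is that $X\times X=\bigcup_n G_n$ with $G_n\in\mathcal E$. This holds automatically for connected $\mathcal E$ on countable $X$, and it is necessary for $F^{\mathcal E}\neq\emptyset$. Under it your construction goes through:
\begin{itemize}
\item Give each edge of $G$ weight $1$, and any other pair $(x,y)$ weight $n$, where $n$ is least with $(x,y)\in G_n\cup G_n^{-1}$. Then $d$ is finite.
\item A chain of total weight at most $R$ has at most $\lfloor R\rfloor$ edges, all lying in $H_R=G\cup\bigcup_{n\leq R}(G_n\cup G_n^{-1})\in\mathcal E$.
\item Hence $E_R(d)\subset H_R^{\lfloor R\rfloor}\in\mathcal E$.
\end{itemize}
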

\begin{proof}
$\Phi\circ\Psi(\mathcal E)$ is the coarse structure generated by all $\mathcal E_d$, $[d]\in F^\mathcal E$, therefore $\Phi\circ\Psi(\mathcal E)\subset\mathcal E$.
     
Let $E\in\mathcal E$, and let $\mathcal E'$ be the coarse structure generated by $E$, $E\circ E$, $E\circ E\circ E$, etc. As it is countably generated, there exists a metric $d$ on $X$ such that $\mathcal E'=\mathcal E_d$, and it is obvious that 1) $\mathcal E'\subset\mathcal E$, and 2) $E\in\mathcal E'$. Then, 1) implies $[d]\in F^\mathcal E$, and 2) implies $\mathcal E\subset\Phi\circ\Psi(\mathcal E)$.
\end{proof} 


Let $\delta_1\prec\delta_2\prec\cdots$ be a strictly increasing sequence in $M(X)$, and let $\mathcal E_{\delta_n}$ be the corresponding coarse structures. Let $\mathcal E=\cup_{n\in\mathbb N}\mathcal E_{\delta_n}$ be the coarse structure generated by all $\mathcal E_{\delta_n}$. Define $F_1$ as the subset of $M(X)$ consisting of all equivalence classes of metrics dominated by $\mathcal E$. As $\mathcal E$ is countably generated, it is metrizable, hence $F_1$ is a principal ideal.
Set $F_2=\cup_{n\in\mathbb N}F_{\delta_n}$. 

\begin{lem}\label{L5}
$F_1\neq F_2$, but $\Phi(F_1)=\Phi(F_2)$.

\end{lem}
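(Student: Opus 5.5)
We need to prove two claims: $F_1\neq F_2$, and $\Phi(F_1)=\Phi(F_2)$. The order convention is that $\alpha\preceq\beta$ means $\beta$ is "smaller distance", i.e. $\beta$ gives the coarser coarse structure. So $\delta_1\prec\delta_2\prec\cdots$ means the structures strictly increase: $\mathcal E_{\delta_1}\subsetneq\mathcal E_{\delta_2}\subsetneq\cdots$. The principal ideal $F_{\delta_n}$ consists of all classes $\alpha\preceq\delta_n$, that is, of the classes whose coarse structure is contained in $\mathcal E_{\delta_n}$. Note that $\mathcal E=\bigcup_n\mathcal E_{\delta_n}$ is itself a coarse structure, since the union is increasing. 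It is generated by the countably many sets $E_k(d_n)$, where $[d_n]=\delta_n$. Hence, as stated before Lemma \ref{L5}, there is a metric $d$ with $\mathcal E_d=\mathcal E$, and $F_1=F^{\mathcal E}=F_{[d]}$.

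\textbf{Step 1: $\Phi(F_1)=\Phi(F_2)$.} By Lemma \ref{L4}, $\Phi(F_1)=\Phi\circ\Psi(\mathcal E)=\mathcal E$. For $F_2$, each $\alpha\in F_2$ satisfies $\alpha\preceq\delta_n$ for some $n$. Hence $\mathcal E_\alpha\subset\mathcal E_{\delta_n}\subset\mathcal E$, because $E_R(a)\subset E_{\varphi(R)}(d_n)$ by the lemma characterizing $\preceq$. So $\Phi(F_2)\subset\mathcal E$. Conversely, $\delta_n\in F_2$ for every $n$, so $\Phi(F_2)$ contains every $\mathcal E_{\delta_n}$ and therefore their union $\mathcal E$. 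This gives $\Phi(F_2)=\mathcal E=\Phi(F_1)$.

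\textbf{Step 2: $F_1\neq F_2$.} Since $\mathcal E_d=\mathcal E$, the metric $d$ is dominated by $\mathcal E$, so $[d]\in F_1$. We show $[d]\notin F_2$. Suppose instead that $[d]\preceq\delta_n$ for some $n$. Then $\mathcal E=\mathcal E_d\subset\mathcal E_{\delta_n}$. But $\mathcal E_{\delta_{n+1}}\subset\mathcal E\subset\mathcal E_{\delta_n}$, which means every $E_R(d_{n+1})$ lies in some $E_S(d_n)$.

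The key point is to turn this inclusion of coarse structures back into an inequality of classes. Define $\varphi(R)=\inf\{S: E_R(d_{n+1})\subset E_S(d_n)\}$, made monotone and finite. Then $d_n(x,y)\le\varphi(d_{n+1}(x,y))$ for all $x,y$. By the earlier lemma characterizing $\preceq$ (after replacing $\varphi$ by a homeomorphism dominating it), this gives $\delta_n\lor\delta_{n+1}=\delta_n$, i.e. $\delta_{n+1}\preceq\delta_n$. Combined with $\delta_n\preceq\delta_{n+1}$, we get $\delta_n=\delta_{n+1}$, contradicting strict monotonicity. Hence $[d]\in F_1\setminus F_2$.

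The main obstacle is this translation between inclusion of coarse structures $\mathcal E_a\subset\mathcal E_b$ and the relation $[a]\preceq[b]$. It must be checked in both directions, with care about the orientation of $\preceq$: smaller metric values mean a larger element. Only uniform discreteness and finiteness of $\varphi$ are needed there. Everything else reduces to Lemma \ref{L4} and the fact that an increasing union of coarse structures is a coarse structure.
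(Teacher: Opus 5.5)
Your proof is correct and follows the paper's argument. You show $\Phi(F_1)=\Phi(F_2)=\bigcup_n\mathcal E_{\delta_n}$, and the generator $[d]$ of the principal ideal $F_1$ cannot lie in $F_2$ since $[d]\preceq\delta_n$ would force $\delta_{n+1}\preceq\delta_n$; your detour in Step 2 through $\mathcal E_{\delta_{n+1}}\subset\mathcal E_{\delta_n}$ is valid but unnecessary, because $\delta_{n+1}\in F_1=F_{[d]}$ already gives $\delta_{n+1}\preceq[d]\preceq\delta_n$ by transitivity, which is how the paper concludes.
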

\begin{proof}
Clearly, $F_2\subset F_1$. Suppose that $F_1\subset F_2$. As $F_1$ is principal, there is $\alpha\in M(X)$ such that $F_1=F_\alpha$. Then $\alpha\in F_2$, which means that there exists $n\in\mathbb N$ such that $\alpha\preceq\delta_n$. But then $\delta_{n+1}\notin F_\alpha$ --- a contradiction. In particular, $F_2$ is not principal.

By definition, we have $\Phi(F_1)=\mathcal E$. But $\Phi(F_2)=\cup_{n\in\mathbb N}\mathcal E_{\delta_n}=\mathcal E$ too, hence $\Phi(F_1)=\Phi(F_2)$. 
\end{proof}

This shows that there exist ideals that don't come from coarse structures.

The following Proposition allows to check whether $\Phi(F)=\Phi(G)$ for ideals $F,G\in I(M(X))$.

\begin{prop}
Let $F,G\in I(M(X))$. $\Phi(F)\subset\Phi(G)$ iff for any $\alpha\in F$, any $a\in\alpha$, and any $R>0$ there exist $b\in\mathcal M(X)$ with $\beta\in G$, and $S>0$ such that $a(x,y)<R$ implies $b(x,y)<S$ for any $x,y\in X$. 

\end{prop}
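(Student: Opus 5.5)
The plan is to unwind the definition of $\Phi$ and reduce both directions to membership of the sets $E_R(a)$ in $\Phi(G)$. Recall that $\Phi(F)$ is the coarse structure generated by the $\mathcal E_d$ with $[d]\in F$. Since each $\mathcal E_d$ is generated by the sets $E_R(d)$, $\Phi(F)$ is generated by the sets $E_R(a)$, $[a]\in F$, $R>0$. Hence $\Phi(F)\subset\Phi(G)$ iff $E_R(a)\in\Phi(G)$ for every $[a]\in F$ and every $R>0$. The condition in the statement says exactly that $E_{R}(a)$ (up to replacing $\le R$ by $<R$, which is harmless after enlarging $R$) is contained in some $E_S(b)$ with $[b]\in G$. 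So the proposition reduces to the following claim about $\Phi(G)$ itself: a set $E\subset X\times X$ lies in $\Phi(G)$ iff $E\subset E_S(b)$ for some $b$ with $[b]\in G$ and some $S>0$.

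The "if" direction of the claim is immediate, since $E_S(b)\in\mathcal E_b\subset\Phi(G)$ and coarse structures are closed under subsets. Applied to $E=E_R(a)$, this gives the "if" direction of the proposition via the first paragraph.

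For the "only if" direction I would describe $\Phi(G)$ explicitly. Consider the family of subsets of sets $E_S(b)$ with $[b]\in G$, $S>0$, and check that it is already a coarse structure, so that it equals $\Phi(G)$.
\begin{enumerate}
\item It contains the diagonal, since $G\neq\emptyset$; the degenerate case $G=\emptyset$ has to be handled separately and is trivial.
\item It is closed under subsets by construction, and under inverses because metrics are symmetric.
\item For finite unions and compositions, take $E_{S_1}(b_1)$ and $E_{S_2}(b_2)$ with $[b_i]\in G$, and put $c=b_1\lor b_2$, which lies in $G$ because $G$ is an ideal. Then $c\le b_i$ pointwise, so $E_{S_i}(b_i)\subset E_{S}(c)$ with $S=\max(S_1,S_2)$. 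This gives $E_{S_1}(b_1)\cup E_{S_2}(b_2)\subset E_S(c)$, and by the triangle inequality $E_{S_1}(b_1)\circ E_{S_2}(b_2)\subset E_{S_1+S_2}(c)$.
\end{enumerate}
Thus the family is a coarse structure containing every $\mathcal E_b$, $[b]\in G$, and it is contained in any coarse structure with that property. Hence it is $\Phi(G)$. So if $\Phi(F)\subset\Phi(G)$, each $E_R(a)$ lies in some $E_S(b)$, which is precisely the condition in the statement.

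The main point, and the only real obstacle, is the closure under unions and compositions in the third step. It is exactly where the subsemilattice property of the ideal $G$ enters: one must use the join $b_1\lor b_2$, i.e. the pointwise-smaller metric $c$, rather than arbitrary representatives. Two technical details remain. First, $c$ is a genuine uniformly discrete metric representing $[b_1]\lor[b_2]$, by the first lemma of Section 2. Second, the inequality $c\le b_i$ holds pointwise for this representative, since $c(x,y)\le\min(b_1(x,y),b_2(x,y))$ directly from the defining infimum, so no passage to equivalence classes is needed.
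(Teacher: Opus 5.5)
Your proposal is correct and follows essentially the same route as the paper: both directions reduce to inclusions $E_R(a)\subset E_S(b)$ with $[b]\in G$, using the identity $\Phi(G)=\bigcup_{\beta\in G}\mathcal E_\beta$. The only difference is that you actually prove this identity, via the join $b_1\lor b_2\in G$ and the pointwise bound $b_1\lor b_2\le\min(b_1,b_2)$, whereas the paper's proof simply asserts it here and justifies it only later, by directedness of the ideal, in the proposition on $C^*_u(X,F)$.
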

\begin{proof}
1. Assume that $\Phi(F)\subset\Phi(G)$. Let $a\in\mathcal M(X)$, $\alpha=[a]\in F$. Given $R>0$, let $E_R(a)=\{(x,y)\in X\times X:a(x,y)\leq R\}$. By assumption, $E_R(a)\in\Phi(G)$. But $\Phi(G)=\cup_{\beta\in G}\mathcal E_\beta$, hence there exists $\beta\in G$ such that $E_R(a)\in \mathcal E_\beta$. Let $b\in\beta$. Then there exists $S>0$ such that $E_R(a)\subset E_S(b)$.

2. To prove the other implication, let $\alpha=[a]\in F$, and let $E\in\mathcal E_\alpha$. Then there exists $R>0$ such that $E\subset E_R(a)$. By the assumption, there exists $S>0$, $\beta\in G$ and $b\in\beta$ such that $E_R(a)\subset E_S(b)$. Thus, $E\subset E_S(b)\in\mathcal E_\beta\subset\Phi(G)$, hence $\Phi(F)\subset\Phi(G)$. 
\end{proof}

\section{Uniform Roe algebras for ideals}

Recall that the uniform Roe algebras are monotone with respect to the partial order on $M(X)$: if $\alpha,\beta\in M(X)$, $\alpha\preceq\beta$, then $C^*_u(X,\alpha)\subset C^*_u(X,\beta)$. 

Any ideal is a directed partially ordered set, so, given $F\in I(M(X))$, we can define $C^*_u(X,F)=\lim_{\alpha\in F}C^*_u(X,\alpha)$. Clearly, $C^*_u(X,F_\alpha)=C^*_u(X,\alpha)$ for any $\alpha\in M(X)$.

\begin{prop}\label{Prop}
If $C^*_u(X,F)=C^*_u(X,\alpha)$ then $\alpha$ is the least upper bound for $F$.

\end{prop}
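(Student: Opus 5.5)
The plan is to prove two things: that $\alpha$ is an upper bound for $F$, i.e. $\beta\preceq\alpha$ for every $\beta\in F$; and that $\alpha\preceq\gamma$ for every upper bound $\gamma$ of $F$. Both will go through a "rigidity" fact for uniform Roe algebras: for $a,b\in\mathcal M(X)$, $C^*_u(X,[a])\subset C^*_u(X,[b])$ implies $[a]\preceq[b]$. I will use this together with the monotonicity recalled above, i.e. $\alpha\preceq\beta$ implies $C^*_u(X,\alpha)\subset C^*_u(X,\beta)$.

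For the upper-bound part, note that for every $\beta\in F$ the algebra $C^*_u(X,\beta)$ is one of the terms of the direct system. Hence $C^*_u(X,\beta)\subset C^*_u(X,F)=C^*_u(X,\alpha)$, and rigidity gives $\beta\preceq\alpha$.

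For minimality, let $\gamma$ be an upper bound of $F$. Then $\beta\preceq\gamma$ for all $\beta\in F$, so monotonicity gives $C^*_u(X,\beta)\subset C^*_u(X,\gamma)$. Since $C^*_u(X,\gamma)$ is closed and $C^*_u(X,F)$ is the closure of the union of the $C^*_u(X,\beta)$, we get $C^*_u(X,\alpha)=C^*_u(X,F)\subset C^*_u(X,\gamma)$. Rigidity then gives $\alpha\preceq\gamma$. Here the direct limit is taken inside $B(\ell^2(X))$ as the norm closure of the union, which is legitimate because all the inclusions are literal inclusions of subalgebras.

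The main obstacle is the rigidity fact itself. To prove it, let $a$ and $b$ be given and fix $R>0$. The partial isometry $V_R$ is the sum of the rank-one operators $e_{y}\otimes e_{x}$ over a family of pairs $(x,y)$ with $a(x,y)\le R$, chosen so that each $x$ and each $y$ occurs at most once. Such a family comes from a matching inside $E_R(a)$, and by uniform discreteness and a finite-colouring argument $E_R(a)$ is a union of finitely many such matchings. Each $V_R$ has propagation $\le R$ for $a$, so $V_R\in C^*_u(X,[a])\subset C^*_u(X,[b])$. Hence for $\varepsilon=1/2$ there is an operator $T$ of finite $b$-propagation $S$ with $\|V_R-T\|<1/2$. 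Comparing matrix coefficients, $|\langle V_R e_x,e_y\rangle-\langle Te_x,e_y\rangle|<1/2$, so every pair in the support of $V_R$ has $b(x,y)\le S$. Taking the union over the finitely many matchings yields $E_R(a)\subset E_{S(R)}(b)$. The function $\varphi(R)=S(R)$, made monotone, then satisfies $b(x,y)\le\varphi(a(x,y))$, which is the order relation $[a]\preceq[b]$, i.e. $b$ dominated by $\mathcal E_a$, in the sense of the lemma characterizing $[a]\lor[b]=[a]$. Care is needed with the direction of $\preceq$ (larger metric means smaller element) and with whether $S(R)\to\infty$ matters. Only the upper bound $b\le\varphi(a)$ is needed, so a homeomorphism can be chosen dominating $S$.
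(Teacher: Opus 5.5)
Your top-level argument matches the paper's. The upper-bound step is the paper's first case. Your minimality step is slightly cleaner than the paper's second case: you show $C^*_u(X,\alpha)=C^*_u(X,F)\subset C^*_u(X,\gamma)$ for an arbitrary upper bound $\gamma$ and then apply rigidity. The paper instead tacitly assumes there is an upper bound $\gamma\prec\alpha$, which needs the meet $\alpha\land\gamma'$ to justify. The paper simply takes the rigidity fact ($C^*_u(X,[a])\subset C^*_u(X,[b])\Rightarrow[a]\preceq[b]$) for granted. What you add is its proof, and that proof has a gap.

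\textbf{The gap.} Your claim that ``by uniform discreteness and a finite-colouring argument $E_R(a)$ is a union of finitely many matchings'' is false.
\begin{itemize}
\item A finite decomposition into partial bijections needs bounded geometry, i.e.\ a uniform bound on $\#\{y:a(x,y)\le R\}$. Metrics in $\mathcal M(X)$ need not have it. For $a=d_1$ on an infinite $X$ you get $E_1(a)=X\times X$, whose rows are infinite.
\item Without finiteness, each matching $M\subset E_R(a)$ gives its own bound $S_M$. Over infinitely many matchings the supremum can be infinite, so no uniform $S(R)$ follows.
\end{itemize}

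\textbf{The repair.} Argue by contradiction. If no $S$ works, pick $(x_n,y_n)\in E_R(a)$ with $b(x_n,y_n)>n$. By pigeonhole, after passing to a subsequence there are two cases.
\begin{itemize}
\item All $x_n$ are distinct and all $y_n$ are distinct. Then $V=\sum_n e_{y_n}\otimes e_{x_n}$ is a single partial isometry supported in $E_R(a)$. Your matrix-coefficient argument then contradicts $\|V-T\|<1/2$ for any $T$ of finite $b$-propagation.
\item One coordinate is constant, say $x_n=x$. Use the triangle inequality for $b$. Choose $n_1<n_2<\cdots$ with $b(x,y_{n_{k+1}})\ge b(x,y_{n_k})+k$. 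The pairs $(y_{n_{2k}},y_{n_{2k+1}})$ then have pairwise distinct entries, lie in $E_{2R}(a)$, and satisfy $b(y_{n_{2k}},y_{n_{2k+1}})\ge 2k$. One partial isometry on these pairs again gives the contradiction. The case $y_n=y$ is symmetric.
\end{itemize}
This yields $E_R(a)\subset E_{S(R)}(b)$ for every $R$, and the rest of your argument goes through.

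\textbf{A small slip.} $E_R(a)\subset E_{S(R)}(b)$ means that $a$ is dominated by $\mathcal E_b$, i.e.\ $\mathcal E_a\subset\mathcal E_b$. It does not mean ``$b$ dominated by $\mathcal E_a$''. The inequality $b\le\varphi(a)$ you wrote is the correct form of $[a]\preceq[b]$.
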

\begin{proof}
Suppose the contrary. If $\alpha$ is not the least upper bound for $F$ then there are two possibilities: either $\alpha$ is not an upper bound (and then there exists $\beta\in F$ such that $\beta\npreceq \alpha$) or $\alpha$ is not the least upper bound (and then there exists another upper bound $\gamma\in M(X)$ such that $\beta\preceq \gamma\prec \alpha$ for any $\beta\in F$).

In the first case $\beta\npreceq \alpha$ implies that $C^*_u(X,\beta)\nsubseteq C^*_u(X,\alpha)$, hence $C^*_u(X,F)\nsubseteq C^*_u(X,\alpha)$.

In the second case we have $C^*_u(X,F)\subseteq C^*_u(X,\gamma)\subset C^*(X,\alpha)$, and as $\gamma\neq \alpha$, $C^*_u(X,\gamma)$ is strictly smaller than $C^*_u(X,\alpha)$. 

In both cases we get a contradiction with $C^*_u(X,F)=C^*_u(X,\alpha)$. 
\end{proof}

For a coarse structure $\mathcal E$ one can define the uniform Roe algebra $C^*_u(X,\mathcal E)$ as for a metric space. Our standard references on uniform Roe algebras are \cite{Roe,Nowak-Yu}. Let $\{\delta_x\}_{x\in X}$, $\delta_x(y)=\left\lbrace\begin{array}{cl}1,&\mbox{if\ }y=x;\\0,&\mbox{otherwise,}\end{array}\right.$ be the standard basis for the Hilbert space $l_2(X)$. The uniform Roe algebra is generated by bounded operators $T\in\mathbb B(l_2(X))$ such that $T_{xy}=\langle\delta_x,T\delta_y\rangle=0$ for $(x,y)\notin E$ for some $E\in\mathcal E$. When $\mathcal E=\mathcal E_[a]$ is the coarse structure generated by a metric $a\in\mathcal M(X)$ then $C^*_u(X,\mathcal E_[a])$ is the uniform Roe algebra for the metric space $(X,a)$, which depends not on the metric itself, but on its equivalence class $\alpha=[a]$. We shall write $C^*_u(X,\alpha)$ for the latter algebra. 

\begin{prop}
For any ideal $F\in I(M(X))$, one has
$C^*_u(X,F)=C^*_u(X,\Phi(F))$.

\end{prop}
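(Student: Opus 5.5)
We prove the two inclusions separately, viewing both algebras as $C^*$-subalgebras of $\mathbb B(l_2(X))$. The direct limit $C^*_u(X,F)$ is the norm closure of $\bigcup_{\alpha\in F}C^*_u(X,\alpha)$, because the connecting maps are inclusions and $F$ is directed. So the statement reduces to showing that this closed union coincides with $C^*_u(X,\Phi(F))$.

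\emph{Inclusion $C^*_u(X,F)\subset C^*_u(X,\Phi(F))$.} Let $\alpha=[a]\in F$. By definition, $\Phi(F)$ contains $\mathcal E_\alpha$, so every controlled set for $a$ lies in $\Phi(F)$. Hence every generator of $C^*_u(X,\alpha)$ (a bounded operator $T$ with $T_{xy}=0$ off some $E_R(a)$) is also a generator of $C^*_u(X,\Phi(F))$. Therefore $C^*_u(X,\alpha)\subset C^*_u(X,\Phi(F))$ for every $\alpha\in F$. Since the right-hand side is norm closed, the closure of the union is contained in it as well.

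\emph{Inclusion $C^*_u(X,\Phi(F))\subset C^*_u(X,F)$.} This is the main step. It suffices to show that every generator $T$ lies in some $C^*_u(X,\alpha)$ with $\alpha\in F$, i.e. that every $E\in\Phi(F)$ is contained in $E_R(d)$ for some $[d]\in F$ and some $R$. The key claim is therefore
$$
\Phi(F)=\bigcup_{\alpha\in F}\mathcal E_\alpha,
$$
meaning that this union is already a coarse structure. This identity is used without proof in the proof of the preceding Proposition, but I will verify it. The diagonal belongs to the union, provided $F\neq\emptyset$; the empty ideal has to be treated separately as a degenerate case. The union is closed under subsets and inverses, since each $\mathcal E_\alpha$ is. For finite unions and compositions, take $E_1\in\mathcal E_{\alpha_1}$ and $E_2\in\mathcal E_{\alpha_2}$. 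Put $\gamma=\alpha_1\lor\alpha_2$, which belongs to $F$ because $F$ is a subsemilattice. Since $\alpha_i\preceq\gamma$, the earlier lemma characterizing $[a]\lor[b]=[a]$ gives $\mathcal E_{\alpha_i}\subset\mathcal E_\gamma$. Concretely, with $c$ a representative of $\gamma$ and $a_i$ representatives of $\alpha_i$, we have $c\le a_i$ pointwise after choosing representatives, or up to a homeomorphism $\varphi$, so $E_R(a_i)\subset E_{\varphi(R)}(c)$. Hence $E_1\cup E_2$ and $E_1\circ E_2$ both lie in $\mathcal E_\gamma$, since $\mathcal E_\gamma$ is a coarse structure. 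So the union is a coarse structure containing all $\mathcal E_\alpha$, and it is clearly the smallest such, which proves the claim.

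With the claim in hand, any $T$ supported in $E\in\Phi(F)$ is supported in some $E\in\mathcal E_\alpha$ with $\alpha\in F$, so $T\in C^*_u(X,\alpha)\subset C^*_u(X,F)$. Since $C^*_u(X,\Phi(F))$ is the closure of the $*$-algebra of such finite-propagation operators, and $C^*_u(X,F)$ is closed, the inclusion follows. The only real obstacle is the directedness argument showing that the union of the $\mathcal E_\alpha$ is closed under composition. It rests entirely on $F$ being closed under $\lor$ together with the monotonicity $\alpha\preceq\gamma\Rightarrow\mathcal E_\alpha\subset\mathcal E_\gamma$.
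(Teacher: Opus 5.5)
Your proof is correct and follows the same route as the paper: you identify the direct limit with the norm closure of $\bigcup_{\alpha\in F}C^*_u(X,\alpha)$, use closure of $F$ under $\lor$ to get $\Phi(F)=\bigcup_{\alpha\in F}\mathcal E_\alpha$, and compare the algebraic uniform Roe algebras; your explicit check that this union is a coarse structure fills in what the paper asserts in one line. Your caveat about $F=\emptyset$ is justified, but it is an actual exception to the statement, not a case that can be handled separately: $\Phi(\emptyset)$ consists of the subsets of the diagonal, so $C^*_u(X,\Phi(\emptyset))=l_\infty(X)\neq 0$, while a limit over the empty index set is $0$ (or undefined), so the statement tacitly assumes $F\neq\emptyset$.
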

\begin{proof}
By definition, $C^*_u(X,F)$ is the direct limit $\injlim_{\alpha\in F}C^*_u(X,\alpha)$, which is isomorphic to the norm closure of $\cup_{\alpha\in F}C^*_u(X,\alpha)$ in $\mathbb{B}(l_2(X))$.

Since $F$ is an ideal, the family of coarse structures $\{\mathcal{E}_\alpha\}_{\alpha\in F}$ is directed under inclusion, and their union is itself a coarse structure, so $\Phi(F)=\cup_{\alpha\in F}\mathcal{E}_\alpha$.

An operator $T$ belongs to the algebraic uniform Roe algebra $\mathbb{C}_u[X,\Phi(F)]$ if and only if its support $\supp(T)$ is contained in some entourage $E\in\Phi(F)$. This means $\supp(T)\in\mathcal{E}_\alpha$ for some $\alpha\in F$, which is exactly the condition that $T$ belongs to the algebraic uniform Roe algebra $\mathbb{C}_u[X,\alpha]$. Thus, $\mathbb{C}_u[X,\Phi(F)]=\cup_{\alpha\in F}\mathbb{C}_u[X,\alpha]$. Hence the claim follows.
\end{proof}

If $\mathcal E$ is not metrizable then $C^*_u(X,\mathcal E)$ does not equal $C^*_u(X,d)$ for any metric $d\in\mathcal M(X)$.

The uniform Roe algebra $C^*_u(X,\mathcal E^{BG})$ for the ideal from the coarse structure of bounded geometry $\mathcal E^{BG}$ was described in \cite{Man-Zhu}.

\end{document}